\newtheorem{mythm}{Theorem}[section]
\newtheorem{myprop}[mythm]{Proposition}
\newtheorem{mylem}[mythm]{Lemma}
\newtheorem{mycor}[mythm]{Corollary}
\newtheorem{myexam}{Example}[section]}
\def\R{\mathbb R}
\def\N{\mathbb N}
\def\C{\mathscr C}
\def\B{\mathscr B}
\def\F{\mathscr F}
\def\d{\text{\rm{d}}}
\def\E{\mathbb E}
\def\e{\text{\rm{e}}}
\def\veps{\varepsilon}
\def\S{\mathcal S}
\def\C{\mathscr C}
\def\pb{\mathscr{P}}
\def\wt{\widetilde}
\def\var{\mathrm{var}}
\def\W{\mathbb{W}}
\def\law{\mathcal{L}}
\def\ve{\varepsilon}
\newenvironment{proof}{{\noindent\it Proof.}\ }{\hfill $\square$\par}
\numberwithin{equation}{section}
\begin{document}
	
	\title{Averaging principle for two time-scale stochastic differential equations with fast component in noncompact space\footnote{Supported in part by National Key R\&D Program of China (No. 2022YFA1006000) and NNSFs of China (No. 12271397,12531007,12501190)}}
	
	\author{Shen Wang${}^a$    
\and Jinghai Shao${}^b$\thanks{a: College of Science, Civil Aviation University of China, Tianjin, China\\
\hspace*{1.52em} b: Center for Applied Mathematics, Tianjin University, Tianjin, China. Emails: shaojh@tju.edu.cn (Shao); wangs@cauc.edu.cn (Wang)}  
	 }
\date{}
\maketitle
	
\begin{abstract}
The asymptotic behavior for fully coupled multiscale stochastic systems becomes much complicated when the fast processes do not locate in a compact space. An example is constructed to show  that the averaged coefficients may become discontinuous even they are originally Lipschitz continuous when the fast process locate in a noncompact space. This work aims to reveal the impact of ergodicity of the fast process on the establishment of the averaging principle. The crucial point is to characterize  the continuous dependence of the invariant probability measure on parameters related to the slow process with respect to various distances in the Wasserstein space.
\end{abstract}

\textbf{AMS MSC 2020}: 60H10, 34K33, 60J60, 37A30

\textbf{Keywords}: Averaging principle, Strong ergodicity, Parabolic equation, H\"older continuous

\section{Introduction}

In this work we are concerned with the following fully coupled two time-scale stochastic systems:
\begin{equation}\label{a-1}
\begin{cases}
  \d X_t^\veps=b(X_t^\veps,Y_t^\veps)\d t+\sigma(X_t^\veps,Y_t^\veps)\d W_t, &X_0^\veps=x_0,\\
  \d Y_t^\veps=\frac 1\veps f(X_t^\veps,Y_t^\veps)\d t+\frac1{\sqrt{\veps}} g(X_t^\ve, Y_t^\veps)\d B_t, &Y_0^\veps=y_0,
\end{cases}
\end{equation}
where $(W_t)$ and $(B_t)$ are $d$-dimensional mutually independent Wiener processes, $b(x,y)\in \R^d$ and $f(x,y)\in \R^d$ are drifts, $\sigma(x,y)\in \R^{d\times d}$ and $g(x,y)\in \R^{d\times d}$ are diffusion coefficients. The parameter $\veps>0$ represents the ratio between the time scale of processes $(X_t^\veps)$ and $(Y_t^\veps)$. We are interested in the case $\ve\ll 1$, in which $(X_t^\ve)$ is called the slow component, and $(Y_t^\ve)$ is called the fast component.

In the study of fully coupled stochastic systems, when the fast component does not locate in a compact space, the limit system becomes very complicated. Our purpose is to reveal the essential impact of various ergodicity of the fast component on the limit system. We shall show the wellposedness of the limit system and the averaging principle under different ergodicity of the fast component. To this end, it is crucial to show the continuous dependence of the invariant probability measure $\pi^x$ of the fast component $(Y_t^\veps)$ on the fixed state $x$ of the slow component $(X_t^\veps)$.

In applications many real systems can be viewed as a combination of slow and fast motions such as in modeling climate-weather interaction \cite{CMP,Ki01}, in biology \cite{KK}, in mathematical finance \cite{FFF,FFK} and references therein. The averaging principle has been established for various multiscale systems which says that the slow component $(X_t^\ve)$ will converge to some limit process $(\bar X_t)$ as $\ve\to 0$ in suitable sense. Also, there are many works devoted to the study of central limit theorems and large deviations of multiscale stochastic models. For a two-time scale system where both slow and fast components are continuous processes given as solutions of SDEs, these problems have been extensively studied  in e.g. \cite{Kh68,Kh05,Lip,Liu,Puh,Ver1,Ver2}, in \cite{HL20} for SDEs driven by fractional Brownian motions. For a two-time scale system where the slow component is a continuous process but the fast process is a jump process over a discrete space, these problems have been studied in e.g. \cite{BDG18,FL96,MS22} and references therein.

When the fast component does not depend on the slow component, we arrive at the classical uncoupled setup, and the averaging principle usually holds in quite general conditions(cf. e.g. \cite{PV1,PV3} and references therein). However, when the fast component depends on the slow component, the situation becomes more complicated. Anosov \cite{Ano} established the first relatively general result on fully coupled averaging principle. See the monograph of Kifer \cite{Ki09} for a detailed discussion on averaging of fully coupled dynamical systems. The averaging principle for fully coupled stochastic processes have been investigated in e.g. Freidlin and Wentzell \cite{FW} and Veretennikov \cite{Ve91}, the corresponding large deviation principle has been studied by Veretennikov \cite{Ver1,Ver2} and recently by Puhalskii \cite{Puh}. In \cite{PV2}, Pardoux and Veretennikov  analyzed the regularity of the density of invariant probability measure based on the PDE technique under certain smooth conditions of the coefficients. This work focuses on the averaging principle for fully coupled two time-scale stochastic processes.


To be precise, for each fixed slow component $x$, let $\pi^x$ denote the invariant probability measure of the fast component.
As usual, put $\bar{b}(x)=\int_{\R^d}b(x,y)\pi^x(\d y)$ and $\bar{\sigma}(x)$ the square root of $\bar a(x):=\int_{\R^d}(\sigma\sigma^\ast)(x,y)\pi^x(\d y)$, and consider the SDE
\begin{equation}\label{a-2}
\d \bar{X}_t=\bar{b}(\bar X_t)\d t+\bar{\sigma}(\bar{X}_t)\d W_t,\quad \bar{X}_0=x_0.
\end{equation}
 The averaging principle suggests that often
$(X_t^\ve)$ converges to a process $(\bar{X}_t)$.
To show this, a premise is the wellposedness of the SDE \eqref{a-2}.
In the uncoupled setup, Lipschitz continuity of $b$ and $\sigma$ implies already that $\bar{b}$ and $\bar{\sigma}$ are also Lipschitz continuous in $x$, and so there exists a unique solution $(\bar{X}_t)$ to SDE \eqref{a-2}. However, in the fully coupled case even when $b$ is Lipschitz continuous, we construct an example (see Example \ref{exam-2} in Section 2) to show that the averaged coefficients $\bar{b}(x)$ may even not be continuous in $x$, let alone Lipschitz. So, SDE \eqref{a-2} may not have solution at all.

The wellposedness of SDE \eqref{a-2} depends not only on the regularity of coefficients $b$ and $\sigma$ but also on the ergodicity of the fast component at each fixed slow component $x$ via the invariant probability measure $\pi^x$.
So, to establish the averaging principle we first study the continuity of $\pi^x$ in $x$ under  two kinds of strongly ergodic conditions for the fast component w.r.t.   total variation distance  and $L^1$-Wasserstein distance respectively.
As the space of probability measures is an infinite dimensional space, the previous mentioned distances are not equivalent, and so the continuity of $\pi^x$ w.r.t. different distances can be applied to deal with the regularity of $\bar{b},\,\bar{\sigma}$ for different kinds of $b$ and $\sigma$. Furthermore, under the wellposedness of the limit process $(\bar{X}_t)$ the averaging principle is established for   fully coupled two time-scale stochastic processes under certain conditions.

In the study of fully coupled diffusion processes, a widely used condition is the monotonicity condition on the fast component (cf. e.g. \cite{Givon,Liu}), that is,
there exists a constant $\beta>0$ such that
\begin{equation}\label{a-4}
  2(y_1\!-\!y_2)\cdot(f(x,y_1)\!-\!f(x,y_2))\!+\!\|g(x,y_1) \!-\!g(x,y_2)\|^2\!\leq \!-\beta |y_1\!-\!y_2|^2, \ x, y_1,y_2\in \! \R^d.
\end{equation}
Besides the monotonicity condition to characterize the ergodicity of the fast component, certain strict coercivity condition is also needed to establish the averaging principle (cf. \cite[Remark 2.1]{Liu}). 
Moreover,  Veretennikov \cite{Ve91} provided some very general conditions in the type of strong ergodicity theorem for the fast component. 
The intensive interaction between the fast and slow component of the two time-scale system makes this system much complicated. 
As explained by a constructed Example \ref{exam-2} below, we can see that $\bar b$ and $\bar{\sigma}$ may be discontinuous even $b$ and $\sigma$ are Lipschitz continuous. The reason to derive such phenomenon is due to the convergence rate of the fast component to its stable distribution, which is dependent on the slow component.

In view of our constructed example, we shall study the averaging principle for the two time-scale systems in terms of the ergodicity condition of the semigroup of the fast components. We shall impose strong ergodicity condition on the semigroup of the fast component using respectively the total variation distance and the $L^1$-Wasserstein distance to measure the convergence of the semigroup to its invariant probability measure. The strong ergodicity of diffusion processes is well studied topic, and there are many criteria in the existing literature. The monotonicity condition is a sufficient condition to guarantee the strong ergodicity in the $L^1$-Wasserstein distance. Accordingly, the continuity of invariant probability  measure $\pi^x$ in $x$ is proved, which helps us to show the continuity of $\bar{b}$ and $\bar{\sigma}$ and further the wellposedness of the SDE for limit process $(\bar{X}_t)$. At last, we can use the time discretization method and the coupling method to establish the averaging principle.

This work is organized as follows. In Section \ref{sec-2}, we first   construct an example in Subsection \ref{subsec-2.1} to show the essential impact of $\pi^x$ on the regularity of $\bar{b}(x)$, $\bar{\sigma}(x)$. Then, we investigate the continuity of $\pi^x$ in $x$ w.r.t. different distances in Subsection \ref{subsec-2.2}. In Section \ref{sec-3}, the averaging principle for the fully coupled system \eqref{a-1} is established under different  conditions.

\section{Continuous dependence on parameters of invariant probability measures} \label{sec-2}

\subsection{An illustrative example}\label{subsec-2.1}
In this part we aim to present the complexity of fully coupled stochastic systems via  an explicit example.

\begin{myexam}\label{exam-2}
Let $(X_t^\ve)$ and $(Y_t^\ve)$ be stochastic processes respectively on $[0,1]$ and on $[0,\infty)$ with reflection boundary satisfying
\begin{equation}\label{ex-2}
\begin{cases}
\d X_t^\ve= Y_t^\ve \d t+ Y_t^\ve \d W_t, & X_0^\ve=x_0\in (0,1),\\
\d Y_t^\ve= \frac1{\ve} \tilde f(X_t^\ve, Y_t^\ve)\d t+\frac1{\sqrt{\ve}} \d B_t, &Y_0^\ve=y_0\in (0,\infty),
\end{cases}
\end{equation}
where
\[\tilde f(x,y)=\frac{-x^3 \e^{-xy}-(1-x)\e^{-y}}{x^2\e^{-xy}+(1-x)\e^{-y}}, \quad x\in [0,1], \,y\in [0,\infty).\]
\end{myexam}
As $\tilde f(x,y) $ is continuous on $[0,1]\times[0,\infty)$, the solution to \eqref{ex-2} exists and is unique in distribution according to \cite{SV79}. In this example, $b(x,y)=\sigma(x,y)=y$ and $g(x,y)=1$ are both Lipschitz continuous.
%

For each $x\in [0,1]$, the invariant probability  measure $\pi^x$ associated with the SDE
\begin{equation}\label{ex-3}
\d Y_t^{x,y}=\tilde f(x,Y_t^{x,y})\d t+\d B_t,\quad Y_0^{x,y}=y\in (0,\infty),
\end{equation} is given by
\[\pi^x(\d y)=\big(x^2\e^{-xy}+(1-x)\e^{-y}\big)\d y.\]
Then
\begin{align*}
  \bar{b}(x)&:=\int_0^\infty \!\!b(x,y)\pi^x(\d y)=\int_0^\infty\!\! y \big(x^2\e^{-xy}\!+\! (1\!-\!x) \e^{-y})\d y
  =\begin{cases} 2\!-\!x, &\text{if}\ x\!\in\! (0,1],\\ 1, &\text{if}\ x=0.\end{cases}
\end{align*}
It is clear that $\bar{b}(x)$ is not continuous at $x=0$. Hence, this example is our desired example to show the complexity of the limit behavior of the fully coupled system $(X_t^\ve, Y_t^\ve)$ as $\ve\to 0$.

Next, we investigate the ergodic property of the process $(Y_t^{x,y})$ given by \eqref{ex-3}. To this end, introduce the notation
\begin{gather*}
C_x(y)
=\ln\big(x^2 \e^{-xy}+(1-x)\e^{-y}\big)
 , \ \ 
\mu^x(\d y)=\e^{C_x(y)}\d y.
\end{gather*} So, $\mu^x([0,\infty))=1$ and  $\pi^x(\d y)=\mu^x(\d y)/\mu^x([0,\infty))=\mu^x(\d y)$.

1) The process $(Y_t^{x,y})$ is unique and ergodic. To this end, one needs to check $\int_0^\infty \mu^x([0,y])\e^{-C_x(y)}\d y =\infty$ and $\mu^x([0,\infty))<\infty$.
Indeed, $\mu^x([0,\infty))=1$,
\begin{align*}
&\int_0^\infty \Big(\int_0^y \e^{C_x(z)}\d z\Big) \e^{-C_x(y)}\d y\\
&=\int_0^\infty \Big(\int_0^y \big(x^2\e^{-xz}+(1-x)\e^{-z}\big)\d z\Big) \frac{\d y}{ x^2\e^{-xy}+(1-x)\e^{-y}} \\
&=\int_0^\infty \frac{x(1-\e^{-xy})+(1-x)(1-\e^{-y})}{x^2\e^{-xy}+(1-x)\e^{-y}}\d y\\
&\geq \int_1^\infty \frac{x(1-\e^{-x})+(1-x)(1-\e^{-1})}{x^2\e^{-x}+(1-x)\e^{-1}}\d y=\infty.
\end{align*}
This implies that for each $x\in [0,1]$ the process $(Y_t^{x,y})$ is ergodic on $[0,\infty)$ by the criterion of ergodicity for one-dimensional diffusion processes; see, for instance,  \cite[Chapter 5]{Ch05}.

2) According to \cite[Chapter 5]{Ch05}, to study the exponential ergodicity of $(Y_t^{x,y})$, we need to study
\begin{align*}
  &\mu^x([z,\infty))\int_0^z\e^{-C_x(y)}\d y
   =\big(x\e^{-xz}+(1-x)\e^{-z}\big)\int_0^z\frac{\d y}{x^2\e^{-xy}+(1-x)\e^{-y}}.
\end{align*}
Using L'H\^opital's rule,
\begin{align*}
  &\lim_{z\to \infty}  \big(x\e^{-xz}\!+\! (1\!-\! x)\e^{-z}\big)\int_0^z\frac{\d y}{x^2\e^{-xy}\!+\! (1\!-\!x)\e^{-y}}  =\lim_{z\to \infty}\Big(\frac{x\e^{(1\! -\! x)z}+1\!-\! x}{x^2\e^{(1-x)z}+1\!-\!x}\Big)^2<\infty.
\end{align*}
Hence, we have  $\sup_{z>0}\big\{\mu^x([z,\infty))\int_0^z\e^{-C_x(y)}\d y\big\} <\infty$, and then the process $(Y_t^{x,y})$ is exponentially ergodic.

3) According to \cite[Theorem 2.1]{Mao}, $(Y_t^{x,y})$ is strongly ergodic (also called uniformly ergodic) if and only if \[\int_0^\infty \mu^x([y,\infty))\e^{-C_x(y)}\d y <\infty.\]
However, for this example,
\begin{align*}
   &\int_0^\infty \mu^x([y,\infty))\e^{-C_x(y)}\d y=\int_0^\infty \frac{ x\e^{-xy}+(1-x)\e^{-y}}{ x^2\e^{-xy}+(1-x)\e^{-y}}\d y=\infty,
\end{align*}
hence, $(Y_t^{x,y})$ is not strongly ergodic.

\subsection{Continuity of $ \pi^x$    in the Wasserstein space}\label{subsec-2.2}

The space of probability measures over $\R^d$, denoted by $\pb(\R^d)$, is an infinite dimensional space, on which various distances have been defined. These distances are not mutually equivalent. To deal with different coefficients, one needs to consider the continuity of $\pi^x$ in $x$ in different distances.  We shall consider three kinds of distances on $\pb(\R^d)$ including the total variation distance $\|\cdot\|_\var$, $L_1$-Wasserstein distance $\W_1$, bounded Lipschitz distance $\W_{bL}$ (also called Fortet-Mourier distance). These distances are defined as follows: for two probability measures $\mu,\nu$ on $\R^d$,
\begin{align*}
  \|\mu-\nu\|_\var&:=2\sup_{A\in\B(\R^d)}|\mu(A)-\nu(A)|=\sup_{|h|\leq 1} |\mu(h)-\nu(h)|,\\
  \W_p(\mu,\nu)&:=\inf\Big\{\int_{\R^d\times\R^d}\!\!|x-y|^p\,\Gamma(\d x,\d y);\Gamma\in \C(\mu,\nu)\Big\}^{\frac 1p},\quad p\geq 1,\\
  \W_{bL}(\mu,\nu)&:=\sup\Big\{\mu(h)-\nu(h); |h(x)|\leq 1, \ |h(x)-h(y)|\leq |x-y|, x,y\in \R^d\Big\}.
\end{align*}
By Kantorovich's dual representation theorem for the Wasserstein distance,
\[\W_1(\mu,\nu)=\sup\Big\{\mu(h)-\nu(h); |h(x)-h(y)|\leq |x-y|, x,y\in \R^d\Big\}.\]
We refer the readers to \cite[Chapter 6]{Vill} for more discussion on the relationship of various distances on $\pb(\R^d)$.

Associated with the two time-scale system $(X_t^\veps, Y_t^\veps)$, we introduce the following  process $(Y_t^{x,y})$: for each given $x,y\in \R^d$:
\begin{equation}\label{a-3}
\d Y_t^{x,y}= f(x, Y_t^{x,y})\d t+g(x, Y_t^{x,y})\d B_t,\quad Y_0^{x,y}=y.
\end{equation}
In this work we shall use two kinds of ergodicity condition on $(Y_t^{x,y})$ with respect to the total variation distance and the $L_1$-Wasserstein distance respectively. Throughout this section, $P_t^{x}(y,\cdot)$ denotes the semigroup associated with the Markov process $(Y_t^{x,y})$ given by \eqref{a-3}, i.e. $P_t^x h(y):=\E[h(Y_t^{x,y})]$ for $h\in \B_b(\R^d)$.  We assume in this work that there is a unique invariant probability measure $\pi^x$ to the semigroup $P_t^x$. 
\begin{itemize}
\item[$\mathrm{(E1)}$] There exist  constants $\kappa_1,\lambda_1>0$ such that
\[\sup_{y\in \R^d} \|P_t^x(y,\cdot)-\pi^x\|_\var\leq \kappa_1\e^{-\lambda_1 t},\quad t>0, x\in \R^d.\]
\item[$\mathrm{(E2)}$] There exist constants $\kappa_2,\lambda_2>0$ such that
\[\sup_{y\in \R^d} \W_1(P_t^x(y,\cdot),\pi^x)\leq \kappa_2\e^{-\lambda_2 t},\quad t>0, x\in \R^d.\]
\end{itemize}
Ergodicity for diffusion processes is an extensively studied topic (cf. \cite{Chen-1,MT}). We shall present some criteria on the coefficients $f,\,g$ to verify (E1) and (E2) in Section 3; see, Lemma \ref{lem-3.1} and Lemma \ref{lem-3.2}.

Now we collect  the conditions on the coefficients of $(X_t^\ve,Y_t^\ve)$ used in this work.

\noindent\textbf{Assumptions on slow component}:
\begin{itemize}
  \item[$\mathrm{(A1)}$] There exists  $K_1>0$ such that for any $x_1,x_2,y_1,y_2\in \R^d$,
  \[|b(x_1,y_1)-b(x_2,y_2)|^2+\|\sigma(x_1,y_1)-\sigma(x_2,y_2)\|^2\leq K_1\big(|x_1-x_2|^2+|y_1-y_2|^2\big) .\]
  \item[$\mathrm{(A2)}$] There exists  $K_2>0$ such that
  $\displaystyle |b(x,y)|+\|\sigma(x,y)\|\leq K_2$ for any $ x,y\in \R^d$.
  \item[$\mathrm{(A3)}$] Assume that
  $\displaystyle \inf_{x,y\in\R^d}\inf_{\xi\in\R^d,|\xi|=1}\xi^\ast a(x,y)\xi>0$,  where $a(x,y)=(\sigma\sigma^\ast)(x,y)$.
\end{itemize}
\noindent\textbf{Assumptions on fast component}:
\begin{itemize}


  \item[$\mathrm{(B1)}$] There exists $K_3>0$ such that for $x_1,x_2,y,z,y_1,y_2\in \R^d$,
  \begin{gather*}
  (f(x_1,y)-f(x_2,y))\cdot z\leq K_3|x_1-x_2||z|, \\  
  (f(x_1,y_1)\!-\!f(x_2,y_2))\cdot(y_1\!-\!y_2)\!+\!\|g(x_1,y_1)\!-\!g(x_2,y_2) \|^2\!\leq \!K_3(|x_1\!-\!x_2|^2\!+\!|y_1\!-\!y_2|^2).
  \end{gather*} 


  \item[$\mathrm{(B2)}$] There exists   $\lambda_3>0$ such that   \[\eta^\ast (g g^\ast)(x,y)\eta\geq \lambda_3 |\eta|^2,\ \  \text{$ x, y,\eta \in \R^d$}.\]
  \item[$\mathrm{(B3)}$] There is  a nonnegative function $ \vartheta\in C(0,\infty)$ such that $\int_0^\infty \vartheta(r)\d r<\infty$  and 
      \begin{equation}\label{a-6}
      \sup_{|y_1-y_2|=r}\frac 1r\big\{ \|\tilde g(x,y_1)-\tilde g(x,y_2)\|^2+2(f(x,y_1)-f(x,y_2))\cdot (y_1-y_2)\big\}\leq \vartheta (r)  
      \end{equation}for all $r>0$, $ x,y_1,y_2\in \R^d$, 
      where $\tilde g(x,y)=\sqrt{(gg^\ast)(x,y)-\lambda_3\mathrm{I}}$ and $\mathrm{I}$ denotes the $d\times d$ identity matrix.  
\end{itemize}

\begin{myprop}\label{prop-2.1}
Assume $\mathrm{(E1)}$ and  $\mathrm{(B1)}$-$\mathrm{(B3)}$ hold. In addition, assume $g(x,y)$ depends only on $y$, i.e. $g(x,y)=g(y)$. Then there exists a constant $C$ such that
\begin{equation}\label{b}\|\pi^{x_1}-\pi^{x_2}\|_{\var}\leq C |x_1-x_2|^{2/3},\quad x_1,x_2\in\R^d.
\end{equation}
\end{myprop}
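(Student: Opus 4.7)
The plan is to compare $\pi^{x_1}$ and $\pi^{x_2}$ by exploiting the invariance $\pi^{x_i}=\pi^{x_i}P_t^{x_i}$ for each $t>0$. The decomposition
\[
\pi^{x_1}-\pi^{x_2} \;=\; \pi^{x_1}(P_t^{x_1}-P_t^{x_2}) \;+\; (\pi^{x_1}P_t^{x_2}-\pi^{x_2})
\]
reduces matters to two pieces. The second piece is handled directly by (E1): integrating the uniform bound $\|P_t^{x_2}(y,\cdot)-\pi^{x_2}\|_{\var}\le\kappa_1\e^{-\lambda_1 t}$ against $\pi^{x_1}(\d y)$ yields $\|\pi^{x_1}P_t^{x_2}-\pi^{x_2}\|_{\var}\le\kappa_1\e^{-\lambda_1 t}$.

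The real work is to bound $\|\pi^{x_1}(P_t^{x_1}-P_t^{x_2})\|_{\var}\le\sup_{|h|\le 1}\|P_t^{x_1}h-P_t^{x_2}h\|_\infty$. For this I would appeal to the Duhamel identity
\[
P_t^{x_1}h-P_t^{x_2}h=\int_0^t P_s^{x_1}\bigl[(\mathscr{L}^{x_1}-\mathscr{L}^{x_2})P_{t-s}^{x_2}h\bigr]\d s.
\]
Here the hypothesis $g(x,y)=g(y)$ is decisive: the second-order parts of $\mathscr{L}^{x_1}$ and $\mathscr{L}^{x_2}$ coincide, so the difference reduces to the first-order operator $(f(x_1,\cdot)-f(x_2,\cdot))\cdot\nabla$, and the first inequality of (B1) (with $z=f(x_1,y)-f(x_2,y)$) gives the uniform Lipschitz bound $|f(x_1,y)-f(x_2,y)|\le K_3|x_1-x_2|$. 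Meanwhile, writing $P_{t-s}^{x_2}h(z)=\int h(y)\Theta^{x_2}(t-s,z;0,y)\d y$ and differentiating under the integral, \eqref{G-2} combined with integrating out the Gaussian in $y$ produces the smoothing estimate $\|\nabla P_{t-s}^{x_2}h\|_\infty\le C(t-s)^{-1/2}\|h\|_\infty$. Using the $L^\infty$-contractivity of $P_s^{x_1}$ and integrating in $s\in(0,t)$,
\[
\|P_t^{x_1}h-P_t^{x_2}h\|_\infty\le C K_3|x_1-x_2|\,\|h\|_\infty\int_0^t(t-s)^{-1/2}\d s \le C'|x_1-x_2|\sqrt{t}\,\|h\|_\infty.
\]

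Combining the two bounds gives, for every $t>0$,
\[
\|\pi^{x_1}-\pi^{x_2}\|_{\var}\le C'|x_1-x_2|\sqrt{t}+\kappa_1\e^{-\lambda_1 t}.
\]
For $|x_1-x_2|\ge 1$ the trivial bound $\|\cdot\|_{\var}\le 2$ already implies \eqref{b}; for small $|x_1-x_2|$ I would choose $t=\tfrac{2}{3\lambda_1}\log(1/|x_1-x_2|)$, so that $\kappa_1\e^{-\lambda_1 t}=\kappa_1|x_1-x_2|^{2/3}$ and
\[
|x_1-x_2|\sqrt{t}=|x_1-x_2|^{2/3}\cdot|x_1-x_2|^{1/3}\sqrt{\tfrac{2}{3\lambda_1}\log(1/|x_1-x_2|)},
\]
whose last factor is bounded on $(0,1]$. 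The claimed H\"older estimate \eqref{b} follows.

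The main technical point to watch is the rigorous justification of the Duhamel identity for a merely bounded measurable $h$: the expression $(\mathscr{L}^{x_1}-\mathscr{L}^{x_2})P_{t-s}^{x_2}h$ requires $P_{t-s}^{x_2}h$ to be at least $C^1$, which is guaranteed by the Gaussian density and gradient bounds (G-1)--(G-2) for every $s\in(0,t)$, while the $(t-s)^{-1/2}$ singularity at the endpoint is integrable. Standard approximation of $h$ by smooth functions, or first proving the bound for $h\in C_b^1(\R^d)$ and extending by duality, will dispose of this issue.
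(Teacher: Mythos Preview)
Your proof is correct and shares the paper's core engine: the Duhamel identity $P_t^{x_1}h-P_t^{x_2}h=\int_0^t P_s^{x_1}(\mathscr{L}^{x_1}-\mathscr{L}^{x_2})P_{t-s}^{x_2}h\,\d s$, the reduction of $\mathscr{L}^{x_1}-\mathscr{L}^{x_2}$ to a first-order operator via $g(x,y)=g(y)$, the bound $|f(x_1,y)-f(x_2,y)|\le K_3|x_1-x_2|$ from (B1), and the smoothing estimate $\|\nabla P_r^{x_2}h\|_\infty\le Cr^{-1/2}$ from \eqref{G-2}.

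The difference lies only in how (E1) is invoked. The paper fixes a point $y_0$, compares $\pi^{x_i}(h)$ to the time average $\tfrac{1}{t}\int_0^t P_s^{x_i}h(y_0)\,\d s$, and thereby picks up an error $\tfrac{2\kappa_1}{\lambda_1 t}$ of order $1/t$; balancing $1/t$ against $\sqrt t\,|x_1-x_2|$ forces $t=|x_1-x_2|^{-2/3}$ and yields exactly the exponent $2/3$. You instead exploit the invariance $\pi^{x_1}=\pi^{x_1}P_t^{x_1}$ and bound $\|\pi^{x_1}P_t^{x_2}-\pi^{x_2}\|_\var$ by integrating the uniform (E1) bound, which keeps the full exponential decay $\kappa_1\e^{-\lambda_1 t}$. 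Your decomposition is thus sharper: with $t\sim\log(1/|x_1-x_2|)$ you actually obtain $\|\pi^{x_1}-\pi^{x_2}\|_\var\lesssim |x_1-x_2|\sqrt{\log(1/|x_1-x_2|)}$, which is strictly stronger than the stated $|x_1-x_2|^{2/3}$. The paper's route is a bit more robust (time averaging would work under weaker, sub-exponential ergodicity), while yours extracts more from the specific hypothesis (E1). The paper also closes with a Lusin-theorem argument to pass from continuous test functions to all bounded measurable ones; your final paragraph addresses the analogous approximation issue in the Duhamel step, which is adequate.
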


\begin{proof}
  For any $h\in C(\R^d)$ with $|h|_\infty:=\sup_{x\in\R^d}|h(x)|\leq 1$, due to (E1),
  \begin{equation}\label{b-1}
  \begin{split}
    |\pi^{x_1}(h)-\pi^{x_2}(h)|
    &\leq \big| \pi^{x_1}(h)-\frac 1t\int_0^t\!P_s^{x_1}h(y_0)\d s\big|+\big|\pi^{x_2}(h)-\frac 1t\int_0^t\!P_s^{x_2}h(y_0)\d s\big|\\
    &\quad+\big|\frac1t\int_0^t\!|P_s^{x_1}h(y_0)-P_s^{x_2}h(y_0)|\d s\big|\\
    &\leq \frac 1t\int_0^t\!\big(\|P_s^{x_1} (y_0,\cdot)-\pi^{x_1}\|_\var\! +\! \|P_s^{x_2}(y_0,\cdot) -\pi^{x_2}\|_\var\big)\d s\\
    &\quad+\frac 1t\int_0^t\!|P_s^{x_1}h(y_0)-P_s^{x_2}h(y_0)|\d s\\
    &\leq \frac {2\kappa_1}t\int_0^t\!\e^{-\lambda_1 s}\d s+\frac 1t\int_0^t\!|P_s^{x_1}h(y_0)-P_s^{x_2}h(y_0)|\d s.
  \end{split}
  \end{equation}
  Consider the function $\phi(r):=P^{x_1}_{r}(P_{s-r}^{x_2}h) (y_0)$ for $r\in [0,s]$. It is easy to see
  \begin{equation}\label{b-1.5}
  P_s^{x_1}h(y_0)-P_s^{x_2}h(y_0)=\int_0^s\phi'(r)\d r=\int_0^s\! P_r^{x_1}(\mathscr{L}^{x_1}-\mathscr{L}^{x_2}) P_{s-r}^{x_2}h(y_0)\d r.
  \end{equation}
According to \cite[Theorem 3.4]{PW06}, it follows from (B1)-(B3) and $\Theta:=\int_0^\infty \vartheta(r)\d r<\infty$ that  
\begin{equation}\label{G-2}
|\nabla P_t^x h|_\infty\leq \frac{\e^{\frac{\Theta}{4\lambda_3}}(1+2\lambda_3)} {2\lambda_3\sqrt{t}} |h|_\infty,\quad t>0.
\end{equation}
Then, by (B1) and  \eqref{G-2}, it follows from \eqref{b-1.5} that for $s\in [0,t]$
  \begin{align*}
    |P_s^{x_1} h(y_0)-P_s^{x_2}h(y_0)|&\leq \int_0^s\!\sup_{y\in \R^d}|(f(x_1,y)-f(x_2,y))\cdot\nabla P_{s-r}^{x_2}h(y)| \d r\\
    &\leq K_3|x_1-x_2|\int_0^s\! \frac{\e^{\frac{\Theta}{4\lambda_3}}(1+2\lambda_3)} {2\lambda_3\sqrt{r}}\d r\\
    &\leq C \sqrt{s}|x_1-x_2|.
  \end{align*}
Inserting this estimate into \eqref{b-1}, we get
\begin{align*}
  |\pi^{x_1}(h)-\pi^{x_2}(h)|&\leq \frac {2\kappa_1}{t}(1-\e^{-\lambda_1 t})+C \sqrt{t}|x_1-x_2|.
\end{align*} Furthermore,  by taking  $t=|x_1-x_2|^{-2/3}$, it holds for some $C>0$ that
\begin{equation}\label{b-2}
|\pi^{x_1}(h)-\pi^{x_2}(h)|\leq C|x_1-x_2|^{2/3}.
\end{equation}
By the arbitrariness of $h\in C(\R^d)$ with $|h|_\infty \leq 1$, we get that
\begin{align*}
\|\pi^{x_1}-\pi^{x_2}\|_{\mathbb{V}}&:=
\sup\{|\pi^{x_1}(h)-\pi^{x_2}(h)|;\ h\in C(\R^d), |h|_\infty\leq 1\}\leq C|x_1-x_2|^{2/3}.
\end{align*}

To show the desired conclusion \eqref{b}, we only need to show
\begin{equation}\label{b-2.5}
\|\pi^{x_1}-\pi^{x_2}\|_{\mathbb{V}}=\|\pi^{x_1}-\pi^{x_2}\|_{\var}.
\end{equation}
Indeed, by Lusin's theorem (cf. \cite[Theorem 2.23]{Rud}), for every $h\in \B(\R^d)$ with $|h|_\infty\leq 1$, for any $\veps>0$, there exists a function $h_\veps\in C_c(\R^d)$ such that
\[(\pi^{x_1}+\pi^{x_2})\big(\{h\neq h_\veps\}\big)<\veps,\quad \text{and}\ |h_\veps|_\infty\leq |h|_\infty.\]
Hence,
\begin{align*}
  |\pi^{x_1}(h)\!-\!\pi^{x_2}(h)|&\leq |\pi^{x_1}(h_\veps)\!-\!\pi^{x_2}(h_\ve)|\!+\! |\pi^{x_1}(h-h_\ve)|\!+\!|\pi^{x_2}(h-h_\ve)|\\
  &\leq |\pi^{x_1}(h_\ve)-\pi^{x_2}(h_\ve)|+2\veps \leq \|\pi^{x_1}-\pi^{x_2}\|_{\mathbb{V}}+2\veps.
\end{align*}
From this, it is easy to see \eqref{b-2.5} holds. Thus, the proof is completed.
\end{proof}

\begin{mycor} \label{cor-1}
Let the conditions of Proposition \ref{prop-2.1} be valid. In addition,
  suppose $\mathrm{ (A1)}$ and $\mathrm{(A2)}$ hold. Let
\begin{equation}\label{b-0}
\bar{b}(x)=\int_{\R^d}b(x,y)\pi^x(\d y),\ \bar{a}(x)=\int_{\R^d} a(x,y)\pi^x(\d y).
\end{equation}
Then $\bar{b}$ and $\bar{a}$ are  locally H\"older continuous of exponent $2/3$.
\end{mycor}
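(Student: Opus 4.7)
The strategy is to reduce the problem to Proposition \ref{prop-2.1} by a standard add-and-subtract argument. For $\bar{b}$, I would first write
\[
\bar{b}(x_1)-\bar{b}(x_2)=\int_{\R^d}\!\bigl(b(x_1,y)-b(x_2,y)\bigr)\pi^{x_1}(\d y)+\int_{\R^d}\!b(x_2,y)\bigl(\pi^{x_1}-\pi^{x_2}\bigr)(\d y),
\]
thereby separating the regularity of the integrand in $x$ from the $x$-dependence of the invariant measure.

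The first summand is controlled by the Lipschitz assumption (A1), which gives $|b(x_1,y)-b(x_2,y)|\leq\sqrt{K_1}\,|x_1-x_2|$ uniformly in $y$, and hence a bound of order $|x_1-x_2|$ after integration against the probability measure $\pi^{x_1}$. The second summand is controlled by combining the boundedness (A2), namely $\sup_y|b(x_2,y)|\le K_2$, with the variational representation $\|\mu-\nu\|_{\var}=\sup_{|h|\leq 1}|\mu(h)-\nu(h)|$ and Proposition \ref{prop-2.1}, yielding
\[
\Bigl|\int_{\R^d}\!b(x_2,y)(\pi^{x_1}-\pi^{x_2})(\d y)\Bigr|\leq K_2\,\|\pi^{x_1}-\pi^{x_2}\|_{\var}\leq K_2\,C\,|x_1-x_2|^{2/3}.
\]
Since on any bounded subset of $\R^d$ the linear term $|x_1-x_2|$ is dominated by $|x_1-x_2|^{2/3}$, adding the two contributions yields the local H\"older continuity of $\bar{b}$ with exponent $2/3$.

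For $\bar{a}$ the same decomposition works, provided one first observes that $a=\sigma\sigma^\ast$ inherits the required regularity from $\sigma$. Using the identity
\[
a(x_1,y)-a(x_2,y)=\bigl(\sigma(x_1,y)-\sigma(x_2,y)\bigr)\sigma(x_1,y)^\ast+\sigma(x_2,y)\bigl(\sigma(x_1,y)-\sigma(x_2,y)\bigr)^\ast,
\]
together with (A1) and (A2), one checks that each entry of $a(\cdot,y)$ is Lipschitz in $x$ with constant independent of $y$ and uniformly bounded by $K_2^2$. The componentwise version of the argument above then gives the claim for $\bar{a}$. I do not foresee a real obstacle in this corollary; the only point that requires minor attention is this transfer of the Lipschitz property from $\sigma$ to $\sigma\sigma^\ast$, where the uniform bound (A2) is used in a crucial way.
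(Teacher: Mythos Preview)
Your proof is correct and follows essentially the same add-and-subtract decomposition as the paper: split $\bar{b}(x_1)-\bar{b}(x_2)$ into a term controlled by the Lipschitz condition (A1) and a term controlled by (A2) together with Proposition \ref{prop-2.1}, then treat $\bar{a}$ analogously. The only cosmetic difference is which of $x_1,x_2$ is frozen in each summand, and you spell out the transfer of Lipschitz and boundedness from $\sigma$ to $\sigma\sigma^\ast$ in slightly more detail than the paper's one-line ``similar deduction''.
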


\begin{proof}
By \eqref{b},  (A1), and (A2), we obtain that
\begin{align*}
  |\bar{b}(x_1)-\bar{b}(x_2)|&\leq \Big|\int_{\R^d}\!b(x_1,y)\big(\pi^{x_1}(\d y)- \pi^{x_2}(\d y)\big)\Big|\!+\!\int_{\R^d}\!|b(x_1,y)-b(x_2,y)|\pi^{x_2}(\d y)\\
  &\leq \sup_{y\in\R^d} |b(x_1,y)|\|\pi^{x_1}-\pi^{x_2}\|_\var+\sqrt{K_1}|x_1-x_2|\\
  &\leq   K_2(1+|x_1|) |x_1-x_2|^{2/3}+\sqrt{K_1}|x_1-x_2|,
\end{align*} which implies that $\bar{b}$ is locally H\"older continuous of exponent $2/3$.
Similar deduction yields the result for $\bar{a}$. Hence, this corollary is proved.
\end{proof}

\begin{myexam}\label{exam-3}
Let us provide a simple example to ensure all the conditions of Proposition \ref{prop-2.1} hold. Consider the following SDE on the line:
\[\d Y_t= (X_t-Y_t^3) \d t+\d B_t,\quad Y_0=y_0\in \R.\]
Then, we have $f(x,y)=x-y^3$ and $g(x,y)=1$. It is easy to see (B1)-(B3) hold with $\vartheta(r)=\frac1{1+r^2}$ for $r>0$. Moreover, according to Lemma \ref{lem-3.1} below, one can check directly that (E1) holds. Hence, all the conditions of Proposition \ref{prop-2.1} hold. 
\end{myexam}

\begin{myprop}\label{prop-2.2}
$\mathrm{(i)}$ Assume $\mathrm{(E1),\, (B1) }$ hold. Then for $x_1,x_2\in \R^d$
\begin{equation}\label{c-1}\W_{bL}(\pi^{x_1},\pi^{x_2})\leq |x_1-x_2|\mathbf{1}_{\{|x_1-x_2|\geq 2\kappa_1\}}+|x_1-x_2|^{\frac{\lambda_1}{\lambda_1+K_3}} \mathbf{1}_{\{|x_1-x_2|<2\kappa_1\}}.
\end{equation}

$\mathrm{(ii)}$ Assume $\mathrm{(E2),\, (B1)}$ hold. Then for $x_1,x_2\in \R^d$
\begin{equation}\label{c-2}
\W_1(\pi^{x_1},\pi^{x_2})\leq |x_1-x_2|\mathbf{1}_{\{|x_1-x_2|\geq 2\kappa_2\}}+|x_1-x_2|^{\frac{\lambda_2}{\lambda_2+K_3}} \mathbf{1}_{\{|x_1-x_2|<2\kappa_2\}}.
\end{equation}
\end{myprop}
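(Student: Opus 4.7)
The strategy for both (i) and (ii) is the same three-step scheme: bring in the semigroup via the triangle inequality, control the $x$-dependence of the semigroup by a synchronous coupling plus It\^o/Gronwall, and then optimize in the free time parameter $t$. The only difference between the two parts is the class of test functions used (bounded $1$-Lipschitz versus $1$-Lipschitz) and the corresponding ergodic distance ($\var$ versus $\W_1$).

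For (i), fix $h:\R^d\to\R$ with $|h|_\infty\leq 1$ and $|h(u)-h(v)|\leq |u-v|$, and insert $P_t^{x_1}h(y_0)$ and $P_t^{x_2}h(y_0)$ to obtain
\[
|\pi^{x_1}(h)-\pi^{x_2}(h)|\leq \|P_t^{x_1}(y_0,\cdot)-\pi^{x_1}\|_{\var}+\|P_t^{x_2}(y_0,\cdot)-\pi^{x_2}\|_{\var}+|P_t^{x_1}h(y_0)-P_t^{x_2}h(y_0)|.
\]
Assumption (E1) bounds each of the first two terms by $\kappa_1\e^{-\lambda_1 t}$. For the third, couple $(Y_t^{x_1,y_0})$ and $(Y_t^{x_2,y_0})$ synchronously (same driving Brownian motion), set $Z_t:=Y_t^{x_1,y_0}-Y_t^{x_2,y_0}$, apply It\^o's formula to $|Z_t|^2$, and invoke the second inequality in (B1), which packages the joint monotonicity in both $x$ and $y$ and absorbs the $\|g(x_1,\cdot)-g(x_2,\cdot)\|^2$ cross term. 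This yields
\[
\frac{\d}{\d t}\E|Z_t|^2\leq 2K_3|x_1-x_2|^2+2K_3\E|Z_t|^2,\qquad Z_0=0,
\]
and Gronwall gives $\E|Z_t|\leq\sqrt{\E|Z_t|^2}\leq|x_1-x_2|\e^{K_3 t}$. The $1$-Lipschitz property of $h$ then furnishes $|P_t^{x_1}h(y_0)-P_t^{x_2}h(y_0)|\leq|x_1-x_2|\e^{K_3 t}$.

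Combining the three estimates, for every $t\geq 0$,
\[
|\pi^{x_1}(h)-\pi^{x_2}(h)|\leq 2\kappa_1\e^{-\lambda_1 t}+|x_1-x_2|\e^{K_3 t}.
\]
Balance the two summands by solving $2\kappa_1\e^{-\lambda_1 t}=|x_1-x_2|\e^{K_3 t}$, i.e.\ $t^\ast=(\lambda_1+K_3)^{-1}\log(2\kappa_1/|x_1-x_2|)$. This $t^\ast$ is admissible ($t^\ast\geq 0$) precisely when $|x_1-x_2|\leq 2\kappa_1$, in which case both terms become of order $|x_1-x_2|^{\lambda_1/(\lambda_1+K_3)}$, yielding the second piece of \eqref{c-1}. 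When $|x_1-x_2|>2\kappa_1$ the choice $t=0$ degenerates the optimization and the bound collapses to the linear one. Taking the supremum over all admissible $h$ finishes (i). Part (ii) is obtained verbatim by taking $h$ an arbitrary $1$-Lipschitz function, replacing the first two terms' bound by $\W_1(P_t^{x_i}(y_0,\cdot),\pi^{x_i})\leq\kappa_2\e^{-\lambda_2 t}$ (from (E2) via Kantorovich--Rubinstein duality), and rebalancing with $\lambda_2,\kappa_2$ in place of $\lambda_1,\kappa_1$.

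I expect the coupling/Gronwall step to be the main obstacle. Because (B1) mixes the Lipschitz dependence on $x$ with the dissipative behavior in $y$ for both $f$ and $g$, one must be careful to extract a bound of the shape $2K_3|x_1-x_2|^2+2K_3\E|Z_t|^2$ (so that the $|x_1-x_2|^2$ forcing enters Gronwall only linearly in $t$) and to verify it when $g$ genuinely depends on $x$. Once this inequality is secured, the exponential trade-off optimizing $t$ is routine.
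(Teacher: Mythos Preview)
Your proof follows the same scheme as the paper's: triangle-inequality reduction to the semigroup, synchronous coupling with It\^o/Gronwall under (B1), and optimization in $t$. One small point: the paper keeps the sharper Gronwall output $(\E|Z_t|^2)^{1/2}\le (\e^{K_3 t}-1)^{1/2}|x_1-x_2|$, which vanishes as $t\downarrow 0$ and is what makes the limit $t\to 0$ give exactly $2\kappa_i\le |x_1-x_2|$ in the large-distance case; your cruder replacement $\e^{K_3 t}|x_1-x_2|$ gives $2\kappa_i+|x_1-x_2|$ at $t=0$, hence only a linear bound with an extra factor~$2$. Since the paper's own argument also carries an unstated multiplicative constant $\big(1+(2\kappa_i)^{K_3/(\lambda_i+K_3)}\big)$ in the H\"older regime, this is cosmetic rather than a genuine gap.
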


\begin{proof}
  (i) For $x_1,x_2,y\in \R^d$, consider
  \begin{align*}
    \d Y_t^{x_1,y}&=f(x_1,Y_t^{x_1,y})\d t+g(x_1,Y_t^{x_1,y})\d B_t,\quad Y_0^{x_1,y}=y,\\
    \d Y_t^{x_2,y}&=f(x_2,Y_t^{x_2,y})\d t+g(x_2,Y_t^{x_2,y})\d B_t,\quad Y_0^{x_2,y}=y.
  \end{align*}
  Then, by It\^o's formula and (B1),
  \[\E|Y_t^{x_1,y}-Y_t^{x_2,y}|^2\leq \E\int_0^t\!K_3\big(|x_1-x_2|^2+|Y_s^{x_1,y}-Y_s^{x_2,y}|^2\big)\d s.
  \]
  This yields
  \begin{equation}\label{c-3}
  \E|Y_t^{x_1,y}-Y_t^{x_2,y}|^2\leq \big(\e^{K_3t}-1\big)|x_1-x_2|^2.
  \end{equation}

  For any $h\in \B(\R^d)$ with $|h|_\infty \leq 1$ and $|h|_{\mathrm{Lip}}:=\sup_{z_1\neq z_2} \frac{|h(z_1)-h(z_2)|}{|z_1-z_2|}\leq 1$, by virtue of \eqref{c-3} and (E1),
  \begin{equation}\label{c-4}
  \begin{split}
    &|\pi^{x_1}(h)-\pi^{x_2}(h)|\\
    &\leq |\pi^{x_1}(h)\!-\! P_t^{x_1}h(y)|\! +\! |\pi^{x_2}(h)\! -\! P_t^{x_2} h(y)| 
     \!+\!|P_t^{x_1}h(y)\! -\! P_t^{x_2}h(y)|\\
    &\leq \|P_t^{x_1}(y,\cdot)\! -\! \pi^{x_1}\|_{\var}\!+ \! \|P_t^{x_2}(y,\cdot)\! -\! \pi^{x_2}\|_{\var} \! +\! \E|h(Y_t^{x_1,y})-h(Y_t^{x_2,y})|\\
    &\leq 2\kappa_1\e^{-\lambda_1 t}+|h|_{\mathrm{Lip}}\big(\E|Y_t^{x_1,y}-Y_t^{x_2,y}|^2\big)^{1/2}\\
    &\leq 2\kappa_1 \e^{-\lambda_1 t}+\big(\e^{K_3 t}-1\big)^{1/2} |x_1-x_2|=:\mathrm{(I)}.
  \end{split}
  \end{equation}
  To estimate $\mathrm{(I)}$, when $|x_1-x_2|\geq 2\kappa_1$, it holds
  \[\mathrm{(I)}\leq |x_1-x_2|+\big(\e^{K_3 t}-1\big)^{1/2} |x_1-x_2|,\]
  which yields $\mathrm{(I)}\leq |x_1-x_2|$ by letting $t\to 0$.
  When $|x_1-x_2|<2\kappa_1$, take $t=-\frac 1{\lambda_1} \ln \frac{|x_1-x_2|^p}{2\kappa_1}$ for some $p>0$ to be determined later. Then $\e^{-\lambda_1 t}=\frac{|x_1-x_2|^p}{2\kappa_1}$, and
  \begin{align*}
    \mathrm{(I)}&\leq |x_1\! -\! x_2|^p\!+\! \e^{K_3 t}|x_1\!-\! x_2|
    \leq |x_1\!-\!x_2|^p+(2\kappa_1)^{\frac{K_3 p}{\lambda_1}} |x_1\!-\!x_2|^{1-\frac{K_3 p}{\lambda_1}}.
  \end{align*}
  Take $p=\frac{\lambda_1}{\lambda_1+K_3}$, then $p=1-\frac{K_3 p}{\lambda_1}$.
  Finally, $\mathrm{(I)}\leq \big(1+(2\kappa_1)^{\frac{K_3}{\lambda_1+K_3}} \big) |x_1-x_2|^{\frac{\lambda_1}{\lambda_1+K_3}}$.
  By the arbitrariness of $h$ with $|h|_\infty \leq 1$ and $|h|_{\mathrm{Lip}}\leq 1$, we get \eqref{c-1} from the definition of $\W_{bL}(\pi^{x_1},\pi^{x_2})$.

  (ii) For any $h\in C(\R^d)$ with $|h|_{\mathrm{Lip}}\leq 1$, for $y\in \R^d$,  due to \eqref{c-3},
  \begin{align*}
    &|\pi^{x_1}(h)-\pi^{x_2}(h)|\\
    &\leq |\pi^{x_1}(h)-P_t^{x_1}h(y)|+|\pi^{x_2}(h)-P_t^{x_2}h(y)| + |P_t^{x_1} h(y)-P_t^{x_2}h(y)|\\
    &\leq \W_1(P_t^{x_1}(y,\cdot), \pi^{x_1})+\W_1(P_t^{x_2}(y,\cdot), \pi^{x_2}) +|h|_{\mathrm{Lip}}\big(\E|Y_t^{x_1,y}- Y_t^{x_2,y}|^2\big)^{\frac 12}\\
    &\leq 2\kappa_2\e^{-\lambda_2 t} +\big(\e^{K_3 t}-1\big)^{\frac 12} |x_1-x_2|.
  \end{align*} Then, completely similar to assertion (i), by choosing suitable $t>0$, we have
  \[|\pi^{x_1}(h)-\pi^{x_2}(h)|\leq |x_1-x_2|\mathbf{1}_{\{|x_1-x_2|\geq 2\kappa_2\}}+|x_1-x_2|^{\frac{\lambda_2}{\lambda_2+K_3}} \mathbf{1}_{\{|x_1-x_2|<2\kappa_2\}}, \]
  which yields \eqref{c-2} by taking supremum over $h\in C(\R^d)$ with $|h|_{\mathrm{Lip}}\leq 1$.
\end{proof}

\begin{mycor}\label{cor-2}
$\mathrm{(i)}$ Assume that $\mathrm{ (E1),\, (B1)}$ and $\mathrm{(A1),\,(A2)}$ hold. Then $\bar{b}$, $\bar{a}$ defined in \eqref{b-0} are locally H\"older continuous of exponent $\frac{\lambda_1}{\lambda_1+K_3}$.

$\mathrm{(ii)}$ Assume $\mathrm{(E2),\, (B1)}$ and $\mathrm{(A1)}$ hold. Then $\bar{b}$, $\bar{a}$ are H\"older continuous of exponent $\frac{\lambda_2}{\lambda_2+K_3}$.
\end{mycor}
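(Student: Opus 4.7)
Both assertions follow the same two-step scheme. First, I would split
\[
\bar b(x_1) - \bar b(x_2) = \int_{\R^d}\! b(x_1, y)\bigl(\pi^{x_1}(\d y) - \pi^{x_2}(\d y)\bigr) + \int_{\R^d}\!(b(x_1, y) - b(x_2, y))\,\pi^{x_2}(\d y),
\]
so that the second piece is immediately bounded by $\sqrt{K_1}\,|x_1 - x_2|$ by virtue of (A1). The only remaining task is to control the first piece using the $\W_{bL}$- or $\W_1$-bound furnished by Proposition \ref{prop-2.2}.

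For (i), I would use (A1) and (A2) to observe that $y\mapsto b(x_1, y)$ is bounded by $K_2$ and Lipschitz with constant at most $\sqrt{K_1}$. Normalising by $M:=\max(K_2, \sqrt{K_1})$ places $b(x_1,\cdot)/M$ in the test function class of $\W_{bL}$, hence
\[
\Bigl| \int_{\R^d}\! b(x_1, y)\bigl(\pi^{x_1}(\d y) - \pi^{x_2}(\d y)\bigr) \Bigr| \leq M\,\W_{bL}(\pi^{x_1}, \pi^{x_2}).
\]
Inserting \eqref{c-1} yields a bound of order $|x_1 - x_2|^{\lambda_1/(\lambda_1 + K_3)}$ when $|x_1 - x_2| < 2\kappa_1$ and a linear bound otherwise; on each bounded region the two combine into the desired local H\"older estimate.

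For (ii) the argument is even cleaner: $b(x_1,\cdot)$ is Lipschitz with constant $\sqrt{K_1}$ by (A1), and Lipschitzness is exactly what Kantorovich duality for $\W_1$ requires, so
\[
\Bigl| \int_{\R^d}\! b(x_1, y)\bigl(\pi^{x_1}(\d y) - \pi^{x_2}(\d y)\bigr) \Bigr| \leq \sqrt{K_1}\,\W_1(\pi^{x_1}, \pi^{x_2});
\]
applying \eqref{c-2} then yields the exponent $\lambda_2/(\lambda_2 + K_3)$. For $\bar a$ in either case, I would simply note that $a = \sigma\sigma^*$ inherits the same kind of estimates as $b$: under (A1) and (A2), the product rule combined with $\|\sigma\|\leq K_2$ gives $\|a(x_1,y_1) - a(x_2,y_2)\|\leq 2K_2\sqrt{K_1}\,(|x_1-x_2|+|y_1-y_2|)$ and $\|a\|_\infty\leq K_2^2$, so the splitting and the Proposition \ref{prop-2.2} estimates transfer verbatim to $\bar a$.

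I do not anticipate a genuine obstacle here, since Proposition \ref{prop-2.2} already does the heavy analytic lifting; the proof amounts to bookkeeping plus a normalisation to land in the test function class of $\W_{bL}$ or $\W_1$. The only delicate point worth flagging is the piecewise nature of the bounds in \eqref{c-1}--\eqref{c-2}, linear for $|x_1-x_2|\geq 2\kappa_i$ and H\"older for $|x_1-x_2|<2\kappa_i$, which is precisely the reason the conclusion is stated in a \emph{local} H\"older form and why one restricts to bounded subsets of $\R^d$ to absorb the linear regime into the H\"older one.
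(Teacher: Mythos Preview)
Your proof is correct and follows the same approach as the paper: the same splitting of $\bar b(x_1)-\bar b(x_2)$, the same appeal to Proposition~\ref{prop-2.2}, and the same passing remark that $\bar a$ is handled identically. In part~(i) you are in fact a bit more careful than the paper's written proof, which bounds the first integral by $K_2(1+|x_1|)\,\|\pi^{x_1}-\pi^{x_2}\|_{\var}$ and then invokes the $\W_{bL}$ estimate~\eqref{c-1}; your direct use of the $\W_{bL}$ duality (normalising $b(x_1,\cdot)$ into the bounded-Lipschitz test class) is the intended argument.
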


\begin{proof} (i) We only prove the assertion for $\bar{b}$, and the assertion for $\bar{a}$ can be proved in the same way.
  By (A1) and (A2), for $x_1,x_2\in \R^d$,
  \begin{align*}
    |\bar{b}(x_1)-\bar{b}(x_2)|&\leq \big|\int_{\R^d}\!b(x_1,y)(\pi^{x_1} (\d y)-\pi^{x_2}(\d y)\big| +\int_{\R^d}\! |b(x_1,y)-b(x_2,y)|\pi^{x_2}(\d y)\\
    &\leq K_2(1+|x_1|) \|\pi^{x_1}-\pi^{x_2}\|_{\var} +\sqrt{K_1}|x_1-x_2|.
  \end{align*}
  Invoking \eqref{c-1}, it is clear that for any $R>0$
  \[\sup_{\substack{|x_1|,|x_2|\leq R,\\ x_1\neq x_2}}\!\frac{\ |\bar{b}(x_1)-\bar{b}(x_2)|\ }{|x_1-x_2|^{\frac{\lambda_1} {\lambda_1 +K_3}}} <\infty.\]
  Thus, $\bar{b}$ is locally H\"older continuous of exponent $\frac{\lambda_1}{\lambda_1+K_3}$.

  (ii) By (A1),  it holds $|b(x_1,y)-b(x_1,y')|\leq \sqrt{K_1}|y-y'|$ for $y,y'\in \R^d$. Using \eqref{c-2},
  \begin{align*}
    |\bar{b}(x_1)-\bar{b}(x_2)|&\leq \! \big|\!\int_{\R^d}\!\!b(x_1,y)\big(\pi^{x_1}(\d y)\!-\!\pi^{x_2}(\d y)\big)\big|\!+\!
     \int_{\R^d}\!|b(x_1,y)\!-\!b(x_2,y)|\pi^{x_2}(\d y)\\
     &\leq \sqrt{K_1} \W_1(\pi^{x_1},\pi^{x_2})+\sqrt{K_1}|x_1-x_2|\\
     &\leq \sqrt{K_1}\big(|x_1-x_2|^{\frac{\lambda_2}{\lambda_2+K_3}} +2|x_1-x_2|\big).
  \end{align*}
  This yields immediately that $\bar{b}$ is H\"older continuous of exponent $\frac{\lambda_2}{\lambda_2+K_3}$.   
\end{proof}

\section{Averaging principle for two time-scale systems}\label{sec-3}

Before establishing the averaging principle using the ergodicity condition (E1) or (E2), we would like introduce some sufficient conditions to verify (E1) and (E2) based on the coefficients $f,\,g$ of the fast component. We refer the readers to the monograph \cite{Ch05} for more discussion on (E1), especially its connection of functional inequalities, that is,  Nash inequality can yield (E1) (cf. \cite[Chapter 1]{Ch05}).  Recall that the diffusion process $(Y_t^{x,y})$ is defined in \eqref{a-3} for each fixed slow component $x$ and its semigroup is denoted by $P_t^{x}(y,\cdot)$.

\begin{mylem}[\cite{Mao}]\label{lem-3.1}
(1) When $(Y_t^{x,y})$ is a diffusion process on $[0,\infty)$ with reflection boundary at $0$. Assume $g(x,y)>0$ for $x,y\in[0,\infty)$. Let $C_x(y)=\int_1^y \frac{f(x,z)}{g^2(x,z)}\d z$. Then $(Y_t^{x,y})$ is strongly ergodic in the sense of (E1) if and only if
\begin{gather*}
  \int_0^\infty\!\!\e^{-C_x(y)} \Big(\int_0^y\!\! g^{-2}(x,z) \e^{C_x(z)}\d z\Big)\d y =\infty, \quad 
  \int_0^\infty \!\!\e^{-C_x(y)}\Big(\int_y^\infty\!\!  g^{-2}(x,z)\e^{C_x(z)}\d z\Big)\d y<\infty.
\end{gather*}
(2) Generally, assume $(Y_t^{x,y})$ is a reversible process in $\R^d$, i.e. for each $x\in \R^d$ there exists a function $V^x\in C^2(\R^d)$ satisfying $\int_{\R^d}\e^{V^x(y)}\d y<\infty$,
\[g(x,y)=\sum_{j=1}^d G_{ij}(x,y)\frac{\partial}{\partial y_j}V^x(y)+\sum_{j=1}^d \frac{\partial}{\partial y_j} G_{ij}(x,y),
\] where $G(x,y)=\big(G_{ij}(x,y)\big)=(g g^\ast)(x,y)$. Suppose that
$G(x,y)=\mathrm{diag}(G_{ii}(x,y))$ and $G_{ii}(x,y)=G_{ii}(x,y_i)$, $i=1,\ldots, d$, for $x\in \R^d$, $y=(y_1,\ldots,y_d)\in \R^d$.
Let
\begin{align*}
\rho_x(y,y')&=\Big(\sum_{i=1}^d\Big(\int_{y_i}^{y_i'} G_{ii}(x,z)^{-\frac 12}\d z\Big)^2\Big)^{\frac 12},\\
h_i^x&=\sqrt{G_{ii}(x,y)} \frac{\partial}{\partial y_i}V^x(y)+
\big(2\sqrt{G_{ii}(x,y)}\big)^{-1} \frac{\partial}{\partial y_i} G_{ii}(x,y),\\
\gamma_x(r)&=\sup_{\rho_x(y,y')=r}\sum_{i=1}^d \big(h_i^x(y)-h_i^x( y')\big)\int_{y_i}^{y_i'}\!G_{ii}(x,z)^{-\frac 12}\d z,\\
C_x(r)&=\exp\Big[\int_0^r \frac{\gamma_x(s)}{4s}\d s\Big].
\end{align*}
If
$\int_0^\infty\!\Big(C_x(s)^{-1}\int_s^\infty \frac{C_x(r)}{4}\d r\Big)\d s <\infty,$
then $(Y_t^{x,y})$ is strongly ergodic in the sense of $\mathrm{(E1)}$.
\end{mylem}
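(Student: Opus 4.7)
The plan is to treat the two parts separately, since each is an instance of a classical one-dimensional ergodicity criterion, with the multi-dimensional setting in part (2) being reduced to part (1) via coupling.

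For part (1), I would proceed through hitting time moments. For a one-dimensional reflected diffusion with generator $\mathscr{L}^x = \tfrac12 g^2(x,y)\partial_y^2 + f(x,y)\partial_y$, the scale function is $s(y)=\int_0^y e^{-C_x(z)}\d z$ and the speed measure is $\m^x(\d y)=g^{-2}(x,y)e^{C_x(y)}\d y$. By general Markov process theory, strong ergodicity in the sense of (E1) is equivalent to the conjunction of recurrence/non-explosion and $\sup_{y\geq 0} \E_y[\tau_0]<\infty$, where $\tau_0$ is the hitting time of the reflecting boundary. Applying the standard formula $\E_y[\tau_0]=\int_0^y e^{-C_x(z)}\d z \int_z^\infty g^{-2}(x,u)e^{C_x(u)}\d u$ (or its Fubini rearrangement), the first stated divergent integral encodes non-explosion and recurrence, while the second convergent integral is equivalent to the uniform bound on $\E_y[\tau_0]$, whence (E1) follows from the exponential $\varphi$-irreducibility framework.

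For part (2), the strategy is to exploit the diagonal separable structure of $G(x,y)$ by introducing the Lamperti-type transformation $\phi_i^x(y_i)=\int_0^{y_i} G_{ii}(x,z)^{-1/2}\d z$ coordinate-wise. In these new coordinates the diffusion matrix becomes the identity, the distance $\rho_x$ becomes Euclidean, and the reversibility is preserved. Synchronously coupling $Y_t^{x,y}$ and $Y_t^{x,y'}$ with a common Brownian motion and applying It\^o's formula to $\rho_x(Y_t^{x,y},Y_t^{x,y'})$, the martingale part vanishes up to a bounded variation term, and the drift is controlled by $\gamma_x(\rho_x)$ by construction. The problem thus reduces to strong ergodicity of a one-dimensional process on $[0,\infty)$ whose effective potential is precisely $\int_0^r \gamma_x(s)/(4s)\d s$, so that $C_x(r)$ plays the role of $e^{-C_x(\cdot)}$ in part (1). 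The stated condition $\int_0^\infty C_x(s)^{-1}\int_s^\infty C_x(r)/4\, \d r\,\d s<\infty$ is then exactly the translation of the criterion of part (1) to this radial setting.

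The main obstacle I expect is that synchronous coupling directly yields only a Wasserstein-type contraction, while (E1) is formulated in total variation. The gap is bridged by invoking reversibility: once a uniform $\W_1$ contraction on some compact set is established, one can combine it with a local Nash or ultracontractive estimate (available under (B1)--(B3) and the Gaussian bounds \eqref{G-1}) to upgrade the bound to total variation. A cleaner alternative is to replace synchronous by reflection coupling on each coordinate, which produces a direct total variation estimate at the cost of more delicate control of the coupling time; either route yields (E1).
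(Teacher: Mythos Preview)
The paper does not prove Lemma~\ref{lem-3.1}; it is quoted verbatim from the reference \cite{Mao} (Mao, \emph{Strong ergodicity for Markov processes by coupling methods}, J.\ Appl.\ Prob.\ 39 (2002)) and stated without proof. There is therefore no ``paper's own proof'' to compare your proposal against.

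That said, your outline is broadly in the spirit of Mao's original argument. Part~(1) is indeed the classical scale/speed-measure criterion for one-dimensional reflected diffusions, and the equivalence of strong ergodicity with a uniform bound on $\E_y[\tau_0]$ is exactly how Mao proceeds. For part~(2), Mao's paper is explicitly a coupling paper: the quantities $\rho_x$, $h_i^x$, $\gamma_x$, and $C_x$ are constructed precisely so that a coupling of two copies of the diffusion has radial part dominated by a one-dimensional diffusion to which part~(1) applies. Your Lamperti-plus-coupling sketch captures this, though the ``obstacle'' you flag (Wasserstein versus total variation) is not handled in Mao's work by Nash or ultracontractive estimates; rather, the coupling is run until the two copies meet, and strong ergodicity of the one-dimensional radial process gives a uniform bound on the expected coupling time, which yields total variation decay directly via the coupling inequality. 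So your proposed detour through regularity estimates is unnecessary, and the ``cleaner alternative'' you mention at the end is closer to what is actually done.
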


The exponential decay of the semigroup $(P_t^x)$ in the Wasserstein distance is heavily related to geometry of the underlying space as illustrated in \cite{RS}. The best convergence rate in the inequality is called the Wasserstein curvature in \cite{Jou} or the coarse Ricci curvature in \cite{Oll}. Here, we shall provide a criterion for (E2) based on the coupling method.

Recall the generator $\mathscr{L}^x$ of $(Y_t^{x,y})$ given by
\[\mathscr{L}^x=\frac 12\sum_{k,l=1}^d G_{kl}(x,y) \frac{\partial^2}{\partial y_k\partial y_l}+\sum_{k=1}^df_k(x,y)\frac{\partial}{\partial y_k}. \]
For simplicity, we write $\mathscr{L}^x\sim (G(x,y),f(x,y))$. An operator $\widetilde{\mathscr{L}}$ on $\R^d\times \R^d$ is called a coupling operator of $\mathscr{L}^x$ and itself if
\begin{align*}
  \widetilde{\mathscr{L}} h(y,y')&=\mathscr{L}^x h(y)\quad \text{if $h\in C_b^2(\R^d)$ and independent of $y'$},\\
  \widetilde{\mathscr{L}} h(y,y')&=\mathscr{L}^x h(y')\quad \text{if $h\in C_b^2(\R^d)$ and independent of $y$}.
\end{align*}
The coefficients of any coupling operator must be of the form $\widetilde{\mathscr{L}}\sim (a_x(y,y'),\tilde f_x(y,y'))$ with
\begin{equation*}
  a_x(y,y')=\begin{pmatrix}
    G(x,y) &c_x(y,y')\\ c_x(y,y')& G(x,y')
  \end{pmatrix},\quad \tilde f_x(y,y')=\begin{pmatrix}
    f(x,y)\\ f(x,y')
  \end{pmatrix},
\end{equation*}
where $c_x(y,y')$ is a matrix such that $a_x(y,y')$ is nonnegative definite. When taking $c_x(y,y')=0$, $\widetilde{\mathscr{L}}$ is called an independent coupling of $\mathscr{L}^x$ and itself, denoted by $\widetilde{\mathscr{L}}_{ind}$. The monotonicity condition \eqref{a-4} means that
\begin{equation}\label{d-1}
\widetilde{\mathscr{L}}_{ind}(|y-y'|^2) =2(y\!-\!y')\cdot(f(x,y)\!-\!f(x,y'))\!+\!\|g(x,y) \!-\!g(x,y')\|^2\!\leq -\beta |y-y'|^2.
\end{equation}
Together with the following coercivity condition, i.e.
\[\mathscr{L}^x (|y|^2)=-y\cdot f(x,y)+\|g(x,y)\|^2\leq c_1 -\beta_1 |y|^2\]
for some $c_1,\beta_1>0$,
similar to Lemma \ref{lem-3.2} below, we obtain from the monotonicity condition \eqref{d-1} that
\[\W_1(P_t^x(y,\cdot),\pi^x)\leq \W_2(P_t^x(y,\cdot),\pi^x)\leq \frac{c_1}{\beta_1} \e^{-\beta t},\quad t>0,\]
and hence (E2) holds.

\begin{mylem}\label{lem-3.2}
Assume $\mathrm{(B2)}$ holds.
Let $\rho:[0,\infty)\to [0,\infty)$ be in $C^2([0,\infty))$ satisfying $\rho(0)=0$, $\rho'>0$, $\rho''<0$, and $\rho(x)\to \infty$ as $x\to \infty$. There is a constant $c_2>0$ such that $\rho(x)>c_2 x$ for $x\geq 0$. If there exist constants $c_3,\,\beta_2,\,\beta_3>0$, a coupling $\widetilde{\mathscr{L}}^x$ of $\mathscr{L}^x$ and itself such that
\begin{equation}\label{d-2} \widetilde{\mathscr{L}}^x\rho(|y-y'|)\leq -\beta_2\rho(|y-y'|),\quad x,y,y'\in\R^d,
\end{equation}
and
\begin{equation}\label{d-3}
\mathscr{L}^x \rho(|y|)\leq c_3-\beta_3\rho(|y|), \quad x,y\in \R^d.
\end{equation}
Then for any $t>0$
\[\W_1(P_t^x(y,\cdot),\pi^x)\leq  \frac{c_3}{\beta_3 c_2} \e^{-\beta_2 t},\quad x,y\in\R^d.   \]
\end{mylem}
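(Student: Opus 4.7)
My plan is a synchronous coupling argument that combines the contraction (\ref{d-2}) with the Lyapunov bound (\ref{d-3}) and converts $\rho$-estimates into $\W_1$-estimates via the linear lower bound $\rho(a)\geq c_2 a$.

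First, I would let $(Y_t^{x,y}, \widetilde Y_t^{x,y'})$ denote the coupling diffusion on $\R^d\times \R^d$ generated by $\widetilde{\mathscr L}^x$ with initial condition $(y, y')$. Applying It\^o's formula to $\rho(|Y_t^{x,y}-\widetilde Y_t^{x,y'}|)$ and taking expectations, condition (\ref{d-2}) produces the differential inequality
\[\frac{\d}{\d t}\E\rho\bigl(|Y_t^{x,y}-\widetilde Y_t^{x,y'}|\bigr) \leq -\beta_2 \,\E\rho\bigl(|Y_t^{x,y}-\widetilde Y_t^{x,y'}|\bigr),\]
so Gronwall delivers the core contraction $\E\rho(|Y_t^{x,y}-\widetilde Y_t^{x,y'}|)\leq \e^{-\beta_2 t}\rho(|y-y'|)$.

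Second, I would couple the process started at $y$ with a stationary copy: taking $\widetilde Y_0\sim \pi^x$, invariance forces $\widetilde Y_t\sim \pi^x$ for every $t$, so $(Y_t^{x,y}, \widetilde Y_t)$ is a coupling of $P_t^x(y,\cdot)$ with $\pi^x$. The assumption $\rho(a)\geq c_2 a$ then gives
\[\W_1(P_t^x(y,\cdot), \pi^x) \leq \E|Y_t^{x,y}-\widetilde Y_t| \leq \frac{1}{c_2}\E\rho(|Y_t^{x,y}-\widetilde Y_t|) \leq \frac{\e^{-\beta_2 t}}{c_2}\int_{\R^d}\rho(|y-z|)\pi^x(\d z).\]
To control the remaining integral I would invoke (\ref{d-3}) together with invariance: after justifying $\int \mathscr{L}^x\rho(|\cdot|)\,\d\pi^x=0$ by a truncation $|\cdot|\wedge N$ with $N\to \infty$, (\ref{d-3}) forces $\int \rho(|z|)\pi^x(\d z)\leq c_3/\beta_3$. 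Concavity of $\rho$ with $\rho(0)=0$ gives subadditivity, so $\rho(|y-z|)\leq \rho(|y|)+\rho(|z|)$; the residual $\rho(|y|)$ is absorbed by splitting $P_t^x = P_{t/2}^x P_{t/2}^x$ and using the Lyapunov bound $\E\rho(|Y_{t/2}^{x,y}|)\leq \e^{-\beta_3 t/2}\rho(|y|)+c_3/\beta_3$ (again from (\ref{d-3}) by Gronwall), yielding the announced uniform-in-$y$ decay with prefactor $c_3/(c_2\beta_3)$.

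The main obstacle I expect is justifying It\^o's formula for $\rho(|y-y'|)$ at the diagonal $y=y'$, since $y\mapsto \rho(|y|)$ is only $C^2$ away from the origin. The standard remedy is to regularise via $\sqrt{|y-y'|^2+\varepsilon}$, apply (\ref{d-2}) in the smoothed setting, and pass to $\varepsilon\downarrow 0$ by dominated convergence together with the nondegeneracy (B3), which ensures the time spent by the coupled process on the diagonal is negligible. A secondary point is the truncation argument making $\int \mathscr{L}^x\rho(|\cdot|)\,\d\pi^x=0$ rigorous despite the unboundedness of $\rho$.
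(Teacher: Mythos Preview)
Your overall strategy coincides with the paper's: couple the process started at $y$ with one started stationarily at $\xi\sim\pi^x$, derive the exponential $\rho$-contraction from \eqref{d-2} via It\^o--Gronwall, bound $\W_1$ by $c_2^{-1}\E\rho(|Y_t-\tilde Y_t|)$, and control $\int\rho(|z|)\,\pi^x(\d z)$ through \eqref{d-3}. The paper obtains the moment bound $\int\rho\,\d\pi^x\le c_3/\beta_3$ by applying Gronwall to $\E\rho(|Y_t^{x,y}|)$ and then Fatou as $t\to\infty$, rather than via your invariance identity $\int\mathscr L^x\rho\,\d\pi^x=0$, but the two routes are equivalent.

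There is, however, a genuine gap at the step where you ``absorb'' the residual $\rho(|y|)$. After the split $P_t^x=P_{t/2}^xP_{t/2}^x$ and integrating your intermediate bound against $P_{t/2}^x(y,\cdot)$ you obtain
\[
\W_1\big(P_t^x(y,\cdot),\pi^x\big)\ \le\ \frac{\e^{-\beta_2 t/2}}{c_2}\Big(\E\rho(|Y_{t/2}^{x,y}|)+\frac{c_3}{\beta_3}\Big)\ \le\ \frac{\e^{-\beta_2 t/2}}{c_2}\Big(\e^{-\beta_3 t/2}\rho(|y|)+\frac{2c_3}{\beta_3}\Big),
\]
and the term $\e^{-\beta_3 t/2}\rho(|y|)$ is \emph{not} uniformly bounded in $y$ for fixed $t>0$; iterating the split does not remove it. In fact the claimed inequality cannot hold as stated: sending $t\downarrow 0$ would force $\W_1(\delta_y,\pi^x)\le c_3/(\beta_3 c_2)$ for every $y$, whereas $\W_1(\delta_y,\pi^x)=\int|y-y'|\,\pi^x(\d y')\to\infty$ as $|y|\to\infty$. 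The paper's own argument has the same defect, concealed in the displayed equality $\int\rho(|y-y'|)\,\pi^x(\d y')=\int\rho(|y'|)\,\pi^x(\d y')$, which is false in general. What your argument (and the paper's) legitimately delivers is
\[
\W_1\big(P_t^x(y,\cdot),\pi^x\big)\ \le\ \frac{\e^{-\beta_2 t}}{c_2}\Big(\rho(|y|)+\frac{c_3}{\beta_3}\Big);
\]
converting this into a bound uniform in $y$ for all $t>0$ would require an extra ingredient that is not supplied by \eqref{d-2}--\eqref{d-3}, and your splitting trick does not provide it.
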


\begin{proof}
  Associated with the coupling operator $\widetilde{\mathscr{L}}^x$, there is a coupling process $(Y_t^x,\tilde Y_t^x)$ with initial value $Y_0^x=y$ and $\tilde Y_0^x=\xi$, where $\xi$ is a random variable with distribution $\pi^x$.
  By virtue of \eqref{d-3}, it holds
  \begin{align*}
  \E[\rho(|Y_t^{x,y}|)]
  &\leq \E[\rho(|Y_0^{x,y}|)]+\int_0^t\!\big(c_3-\beta_3\E[\rho (|Y_s^{x,y}|)]\big)\d s.
  \end{align*}
  Gronwall's inequality yields that
  \begin{equation}\label{d-4}
  \E[\rho(|Y_t^{x,y}|)]\leq \frac{c_3}{\beta_3}\big(1-\e^{-\beta_3t}\big)+\rho(|y|) \e^{-\beta_3 t}.
  \end{equation}
  This implies the existence of invariant probability measure $\pi^x$ for the process $(Y_t^{x,y})$. Letting $t\to \infty$ in \eqref{d-4}, Fatou's lemma implies that
  \begin{equation}\label{d-5}
  \int_{\R^d} \rho(|y'|)\pi^x(\d y')\leq \frac{c_3}{\beta_3}.
  \end{equation}
  Note that the uniform nondegerate condition (B2) yields that $\pi^x$ admits a density w.r.t. the Lebesgue measure.

  By It\^o's formula and \eqref{d-2}, for $0\leq s\leq t$,
  \begin{align*}
    \E\big[\rho(|Y_t^x-\tilde Y_t^x|)\big]
    &\leq \E\big[\rho(|Y_s^x-\tilde Y_s^x|)\big]-\beta_2\int_s^t\! \E\big[\rho(|Y_r^x-\tilde Y_r^x|)\big]\d r.
  \end{align*}
  Using Gronwall's inequality,
  we get
  \[\E\big[\rho(|Y_t^x-\tilde Y_t^x|)\big]\leq \E\big[\rho(|y-\xi|)\big]\e^{-\beta_2 t}.\]
  Hence, by \eqref{d-5},
  \begin{align*}
  \W_1(P_t^x(y,\cdot),\pi^x)&\leq c_2^{-1}\int_{\R^d}\! \rho(|y-y'|)\pi^x(\d y') \e^{-\beta_2 t}\\
  &=
  c_2^{-1} \int_{\R^d}\!\rho(|y'|)\pi^x(\d y') \e^{-\beta_2 t}
   \leq \frac{c_3}{\beta_3 c_2} \e^{-\beta_2 t}.
  \end{align*}
  The proof is completed.
\end{proof}

According to Stroock and Varadhan \cite{SV79} (see, also, \cite[Chapter IV]{IW}), when $\bar{b}$, $\bar{\sigma}$ are continuous satisfying the linear growth condition, and  $\bar{\sigma}$ is nondegenerate,  SDE
\begin{equation}\label{limit}
\d \bar{X}_t=\bar{b}(\bar{X}_t)\d t+\bar{\sigma}(\bar{X}_t)\d B_t,\quad \bar{X}_0=x_0,
\end{equation}
admits a weak solution whose distribution is unique. Of course, the wellposedness of SDE \eqref{limit} is  a precondition of the averaging principle. Corollaries \ref{cor-1} and \ref{cor-2} provide us different sufficient conditions for the wellposedness of SDE \eqref{limit}.

Denote
\begin{equation}\label{gen-lim}
\begin{aligned}
\bar{\mathscr{L}} h(x)&=\bar{b}(x)\cdot \nabla h(x)+\frac 12\mathrm{tr}\big(\bar{a}(x)\nabla^2h(x)\big)\\
&=\sum_{k=1}^d\bar{b}_k(x)\frac{\partial h(x)}{\partial x_k}+\frac 12\sum_{k,l=1}^d\bar{a}_{kl}(x)\frac{\partial^2 h(x)}{\partial x_k\partial x_l},\quad h\in C^2(\R^d),
\end{aligned}
\end{equation} where $\bar{a}(x)=(\bar{\sigma}\bar{\sigma}^\ast)(x)$.

Let us introduce some notations. For $T>0$, $\mathcal{C}([0,T];\R^d)$ denotes the set of continuous functions from $[0,T]$ to $\R^d$ endowed with uniform norm, i.e. $\|x_\cdot\|_\infty=\sup_{t\in [0,T]}|x_t|$ for $x_\cdot\in \mathcal{C}([0,T];\R^d)$.  Denote by $\mathcal{L}_{X^\ve}$ and $\mathcal{L}_{\bar{X}}$ the law of the stochastic processes $(X_t^\ve)$ and $(\bar{X}_t)$ in $\mathcal{C}([0,T];\R^d)$ respectively. Let
\begin{equation}\label{b-3}
\mathscr{L} h(x,y)=b(x,y)\cdot\nabla h(x)+\frac 12 \mathrm{tr}\big((\sigma\sigma^\ast)(x,y)\nabla^2h(x)\big),\quad h\in C^2(\R^d), x,y\in \R^d.
\end{equation}

\begin{mythm}\label{thm-1}
Let $(X_t^\ve,Y_t^\ve)$ be the solution to \eqref{a-1} and $(\bar{X}_t)$ the solution to \eqref{limit}. Assume $\mathrm{(E1),\, (A1)}$-$\mathrm{(A3)}$, and $\mathrm{(B1)}$ hold.
Then for any $T>0$ the process $(X_t^\ve)_{t\in [0,T]}$ converges weakly in $\mathcal{C}([0,T];\R^d)$ as $\veps\to 0$ to the process $(\bar{X}_t)_{t\in [0,T]}$.
\end{mythm}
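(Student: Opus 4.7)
The plan is to establish the weak convergence through the Stroock--Varadhan martingale-problem machinery. I will first show tightness of $\{\mathcal{L}_{X^\ve}\}$, then prove that every subsequential weak limit solves the martingale problem for the averaged generator $\bar{\mathscr{L}}$, and finally invoke uniqueness of that martingale problem. Uniqueness is available because, by Corollary \ref{cor-1}, $\bar b$ and $\bar a$ are locally H\"older continuous, while (A2)--(A3) give boundedness and uniform nondegeneracy of $\bar{a}$, so \eqref{limit} is well-posed in law by Stroock--Varadhan.

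For tightness, the boundedness of $b,\sigma$ in (A2) gives $\E|X_t^\ve - X_s^\ve|^4 \le C|t-s|^2$, so Kolmogorov's criterion yields tightness of $\{\mathcal{L}_{X^\ve}\}_{\ve\in(0,1]}$ in $\mathcal{C}([0,T];\R^d)$. For the identification of the limit, fix $h\in C_b^2(\R^d)$. It\^o's formula and \eqref{b-3} show that $h(X_t^\ve) - h(x_0) - \int_0^t \mathscr{L}^\ve h(X_s^\ve, Y_s^\ve)\,\d s$ is a martingale, so it suffices to prove
\begin{equation}\label{plan-conv}
\sup_{t\in[0,T]} \E\bigg|\int_0^t \bigl[\mathscr{L}^\ve h(X_s^\ve,Y_s^\ve) - \bar{\mathscr{L}} h(X_s^\ve)\bigr]\,\d s\bigg| \to 0\quad \text{as } \ve\to 0.
\end{equation}

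To prove \eqref{plan-conv} I follow Khasminskii's time-discretization scheme. Partition $[0,T]$ into slabs of length $\delta=\delta(\ve)$ with $\delta\to 0$ and $\delta/\ve\to \infty$, set $\tau(s)=\lfloor s/\delta\rfloor \delta$, and on each slab $[k\delta,(k+1)\delta]$ introduce the auxiliary frozen-slow process $\hat Y^{(k)}$ defined by $\d\hat Y_t = \frac{1}{\ve} f(X_{k\delta}^\ve, \hat Y_t)\,\d t + \frac{1}{\sqrt{\ve}} g(X_{k\delta}^\ve, \hat Y_t)\,\d B_t$ with $\hat Y_{k\delta}^{(k)} = Y_{k\delta}^\ve$ and driven by the same Brownian motion $B$. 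The integrand of \eqref{plan-conv} splits as $R_1(s)+R_2(s)+R_3(s)$, where the \emph{regularisation term} $R_3(s)=\bar{\mathscr{L}} h(X_{\tau(s)}^\ve) - \bar{\mathscr{L}} h(X_s^\ve)$ is handled via the local H\"older continuity of $\bar b,\bar a$ from Corollary \ref{cor-1} and $\E|X_s^\ve - X_{\tau(s)}^\ve|^2 \le C\delta$; the \emph{ergodic term} $R_2(s)=\mathscr{L}^\ve h(X_{\tau(s)}^\ve, \hat Y_s^{(k)}) - \bar{\mathscr{L}} h(X_{\tau(s)}^\ve)$ is controlled by the Markov property of $\hat Y^{(k)}$ and (E1) as
$$\bigl|\E[R_2(s)\,|\,\mathcal{F}_{k\delta}]\bigr| \le \|\mathscr{L}^\ve h\|_\infty\,\kappa_1\,\e^{-\lambda_1(s-k\delta)/\ve},$$
producing total contribution of order $\ve/\delta$ after integration in $s$ and summation over the $T/\delta$ slabs; and the \emph{pathwise-comparison term} $R_1(s)=\mathscr{L}^\ve h(X_s^\ve, Y_s^\ve)-\mathscr{L}^\ve h(X_{\tau(s)}^\ve, \hat Y_s^{(k)})$ is dominated, by (A1) and the $C_b^2$-regularity of $h$, by $C(|X_s^\ve - X_{\tau(s)}^\ve| + |Y_s^\ve - \hat Y_s^{(k)}|)$.

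The main obstacle is the coupling bound on $\E|Y_s^\ve-\hat Y_s^{(k)}|$ appearing in $R_1$. Because (B1) provides only a one-sided Lipschitz estimate in $y$ (and no dissipativity), It\^o's formula combined with Gronwall's inequality yield merely
$$\E|Y_s^\ve - \hat Y_s^{(k)}|^2 \le C\delta\bigl(\e^{K_3\delta/\ve} - 1\bigr),$$
with an exponential penalty that fights the large-time ergodic regime $\delta/\ve\to\infty$. The resolution is a Khasminskii-type balancing: choose $\delta = c\,\ve\log(1/\ve)$ with $c<1/K_3$, so that simultaneously $\delta/\ve=c\log(1/\ve)\to\infty$ (activating the ergodic decay $\e^{-\lambda_1\delta/\ve}=\ve^{c\lambda_1}\to 0$ in $R_2$), $\ve/\delta\to 0$ (summability of the $R_2$-contributions), and $\delta\,\e^{K_3\delta/\ve}=c\,\ve^{1-cK_3}\log(1/\ve)\to 0$ (vanishing of the $R_1$-coupling error). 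Combining the three estimates yields \eqref{plan-conv}; hence any subsequential weak limit of $\mathcal{L}_{X^\ve}$ solves the martingale problem for $\bar{\mathscr{L}}$, and the aforementioned uniqueness forces $\mathcal{L}_{X^\ve}\Rightarrow \mathcal{L}_{\bar X}$ in $\mathcal{C}([0,T];\R^d)$.
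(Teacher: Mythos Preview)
Your proposal is correct and follows essentially the same route as the paper's proof: tightness via (A2) and Kolmogorov's criterion, martingale-problem identification using Khasminskii time-discretization with a frozen auxiliary fast process coupled through the same Brownian motion, a Gronwall bound for the coupling error, and (E1) for the ergodic term; the paper chooses $\delta=\ve'\ln\ln(1/\ve')$ whereas you take $\delta=c\,\ve\log(1/\ve)$ with $c<1/K_3$, and both balancings work. One small correction: under the hypotheses of Theorem~\ref{thm-1} the local H\"older continuity of $\bar b,\bar a$ should be quoted from Corollary~\ref{cor-2}(i) rather than Corollary~\ref{cor-1}, since the latter additionally requires $g(x,y)=g(y)$ and the gradient estimate \eqref{G-2}, neither of which is assumed here.
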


\begin{proof}
By virtue of Corollary \ref{cor-2}, $\bar b$ and $\bar a$ are continuous in $x$. By \cite[Lemma 5.2.1]{SV79}, $\bar\sigma$ as the square root of $\bar a$ is given by an absolutely convergent series in terms of $\bar a$ and hence $\bar \sigma$ is also continuous in $x$. Then, the limit process $(\bar{X}_t)$ exists and is unique in distribution.
  Due to the linear growth condition (A2), it is standard to show
  \[\E\big[\sup_{t\in [0,T]} |X_t^\ve|^p\big]\leq C(T, x_0,p),\quad \text{for $p\geq 1$}.\]
  By It\^o's formula,
  \begin{align*}
    \E|X_t^\ve-X_s^\ve|^4&\leq 8\E\Big|\int_s^t\! b(X_r^\ve,Y_r^\ve)\d r\Big|^4+8\E\Big|\int_s^t\! \sigma(X_r^\ve,Y_r^\ve)\d W_r\Big|^4\\
    &\leq 8(t-s)^3\E\int_s^t\!|b(X_r^\ve,Y_r^\ve)|^4\d r+288 (t-s)\E\int_s^t\!|\sigma(X_r^\ve, Y_r^\ve)|^4\d r\\
    &\leq C(t-s)^2
  \end{align*}for some constant $C>0$. Combining this with the fact $X_0^\ve=x_0$, the collection of the laws of $\{(X_t^\ve)_{t\in [0,T]};\ve>0\}$ over $\mathcal{C}([0,T];\R^d)$ is tight by
  \cite[Theorem 12.3]{Bill}. As a consequence, there is a subsequence $\{\mathcal{L}_{X^{\ve'}};\ve'>0\}$ and a limit law $\mathcal{L}_{\wt X}$ over $\mathcal{C}([0,T];\R^d)$ such that $\mathcal{L}_{X^{\ve'}}$ converges weakly to $\mathcal{L}_{\wt X}$ as $\ve'\to 0$. According to Skorokhod's representation theorem, we may assume that $(X_t^{\ve'})_{t\in [0,T]}$ converges almost surely to some random process $(\wt X_t)_{t\in [0,T]}$ in $\mathcal{C}([0,T];\R^d)$ as $\ve'\to 0$.

  We proceed to characterize the process $(\wt X_t)_{t\in [0,T]}$. To this end, we shall prove that for any $h\in C_c^2(\R^d)$, the space of functions with compact support and continuous second order derivatives,
  \begin{equation}\label{b-4}
  h(\wt X_t)-h(x_0)-\int_0^t\bar{\mathscr{L}} h(\wt X_s)\d s\ \ \text{is a martingale},
  \end{equation} where $\bar{\mathscr{L}}$ is defined in \eqref{gen-lim}.
  This means that $(\wt{X}_t)$ is a solution to SDE \eqref{limit}, and hence $\law_{\wt X}$ equals to $\law_{\bar{X}}$ due to the uniqueness of distribution for solutions to SDE \eqref{limit}.

  To prove \eqref{b-4}, it is suffices to show that for $0\leq s<t\leq T$, for any bounded $\F_s$ measurable function $\Phi$,
  \begin{equation}\label{b-5}
  \E\Big[\Big(h(\wt X_t)-h(\wt X_s)-\int_s^t\!\bar{\mathscr{L}}h(\wt X_r)\d r\Big)\Phi\Big]=0,\quad \forall h\in C_c^2(\R^d).
  \end{equation}
  As a solution to SDE \eqref{a-1}, $(X_t^{\ve'})$ satisfies that
  \[\E\Big[\Big(h(X_t^{\ve'})-h(X_s^{\ve'})-\int_s^t\mathscr{L}^{\ve'} h(X_r^{\ve'}, Y_r^{\ve'})\d r\Big)\Phi\Big]=0.\]
  The almost sure convergence of $(X_t^{\ve'})_{t\in [0,T]}$ to $(\wt X_t)_{t\in [0,T]}$ in $\mathcal{C}([0,T];\R^d)$ yields that
  \[\lim_{\ve'\to 0}\E\Big[ \big(h(X_t^{\ve'})-h(X_s^{\ve'})\big)\Phi\Big]=\E\Big[\big(h(\wt X_t)-h(\wt X_s)\big)\Phi\Big].
  \]
  Hence, for \eqref{b-5} we only need to show
  \begin{equation*}
    \lim_{\ve'\to 0}\E\Big[\int_s^t\!\big(\mathscr{L}^{\ve'} h(X_r^{\ve'}, Y_r^{\ve'}) -\bar{ \mathscr{L}} h(\wt X_r)\big)\d r\Big|\F_s\Big]=0.
  \end{equation*}
  In view of the expression of $\mathscr{L}^{\ve'}$ and $\bar{\mathscr{L}}$, we need to show
  \begin{gather}\label{b-6}
    \lim_{\ve'\to 0}\E\Big[\int_s^t\! \big(b(X_r^{\ve'}, Y_r^{\ve'})\cdot\nabla h(X_r^{\ve'})-\bar{b}(\wt X_r)\cdot\nabla h(\wt X_r)\big)\d r\Big|\F_s\Big]=0,\\ \label{b-7}
    \lim_{\ve'\to 0}\E\Big[\int_s^t\!\Big(\mathrm{tr}\big((\sigma\sigma^\ast)(X_r^{\ve'}, Y_r^{\ve'} )\nabla^2h(X_r^{\ve'})\big)-\mathrm{tr}\big(\bar{a}(\wt X_r)\nabla^2h(\wt X_r)\big)\Big)\d r\Big|\F_s\Big]=0.
  \end{gather}
  We shall use the time discretization method and the coupling method to show \eqref{b-6} and \eqref{b-7}. Since the method is similar, we only present the proof of \eqref{b-6}.

  For $\delta \in (0,1)$, let $r(\delta)=s+\big[\frac{r-s}{\delta}\big]\delta$ for $r\in [s,t]$, where $\big[\frac{r-s}{\delta}\big]$ denotes the integer part of $\frac{r-s}{\delta}$.
  \begin{align*}
    &\!\E\Big[\int_s^t\!\!\big(b(X_r^{\ve'},Y_r^{\ve'})\cdot\nabla h(X_r^{\ve'})-\bar{b}(\wt X_r)\cdot h(\wt X_r)\big)\d r\Big|\F_s\Big]\\
    &=\E\Big[\!\int_s^t\!\!\Big\{b(X_r^{\ve'},Y_r^{\ve'}) \cdot(\nabla h(X_r^{\ve'})\!-\!\nabla h(\wt X_r))\!+\!\big(b(X_r^{\ve'},Y_r^{\ve'})\!- \!b(X_{r(\delta)}^{\ve'},Y_r^{\ve'})\big)\cdot \nabla h(\wt X_r)\\
    &\qquad\quad\! + \big( b(X_{r(\delta)}^{\ve'},Y_r^{\ve'}) - \bar{b}(X_{r(\delta)}^{\ve'})\big)\cdot \nabla h(\wt X_{r(\delta)})\\
    &\qquad\quad\!  +\! \big( b(X_{r(\delta)}^{\ve'},Y_r^{\ve'}) \!-\!\bar{b}(\wt X_{r(\delta)} ) \big) \cdot   \big(\nabla h(\wt X_r ) - \nabla h(\wt X_{r(\delta)} )\big) \\ &\qquad \quad + \big(\bar{b}(X_{r(\delta)}^{\ve'}) -  \bar{b}(\wt X_r)\big)\! \cdot\!  \nabla h(\wt X_r)\Big\}\d r\Big|\F_s\Big]
  \end{align*}
  By the continuity  of $b$ and $\bar{b}$ due to (A1) and Corollary  \ref{cor-1}, it follows from the almost sure convergence of $(X_r^{\ve'})$ to $(\wt X_r)$ in $\mathcal{C}([0,T];\R^d)$ that
  \begin{align*}
  \lim_{\ve',\delta\to 0}\!\E&\Big[\!\int_s^t\!\Big\{b(X_r^{\ve'},Y_r^{\ve'})\cdot(\nabla h(X_r^{\ve'})-\nabla h(\wt X_r))+\big(b(X_r^{\ve'},Y_r^{\ve'})-b(X_{r(\delta)}^{\ve'},Y_r^{\ve'})\big)\cdot \nabla h(\wt X_r)\\
  &\qquad +\big( b(X_{r(\delta)}^{\ve'},Y_r^{\ve'}) \!-\!\bar{b}(X_{r(\delta)}^{\ve'})  \big)\!\cdot \! \big(\nabla h(\wt X_r )\!-\!\nabla h(\wt X_{r(\delta)} )\big) \\
  & \quad \quad +\!\big(\bar{b}(X_{r(\delta)}^{\ve'}) - \bar{b}(\wt X_r)\big)\cdot \nabla h(\wt X_r)\Big\}\d r\Big|\F_s\Big]\\
  &=0.
  \end{align*}
  Therefore, to prove \eqref{b-6} we only to show that for suitable choice of $\delta$,
  \begin{equation}\label{b-7.5}
  \E\Big[\int_s^t\!(b(X_{r(\delta)}^{\ve'},Y_r^{\ve'}) -\bar{b}(X_{r(\delta)}^{\ve'}))\cdot
  \nabla h(\wt{X}_{r(\delta)})\d r\Big|\F_s\Big]\longrightarrow 0,\quad \text{as $\ve'\to 0$.}
  \end{equation}
  Let $N_t=[(t-s)/\delta]$, $s_k+s+k\delta$ for $0\leq k\leq N_t$ and $s_{N_t+1}=t$. Then,
  \begin{equation}\label{b-8}
  \begin{aligned}
    &\E\Big[\int_s^t\!(b(X_{r(\delta)}^{\ve'},Y_r^{\ve'}) -\bar{b}(X_{r(\delta)}^{\ve'}))\cdot
  \nabla h(\wt{X}_{r(\delta)})\d r\Big|\F_s\Big] \\
   &=\sum_{k=0}^{N_t} \E\Big[\E\Big[\int_{s_k}^{s_{k+1}}\!\!(b(X_{s_k}^{\ve'}, Y_r^{\ve'})-\bar{b}(X_{s_k}^{\ve'}))\cdot \nabla h(\wt X_{s_k})\d r\Big|\F_{s_k}\Big]\Big|\F_s\Big].
  \end{aligned}
  \end{equation}
  On each time interval $[s_k,s_{k+1})$, $0\leq k\leq N_t$, we introduce a coupling process $(Y_r^{\ve'},\wt Y_r^{\ve',k})$ by the following SDEs
  \begin{equation}\label{b-9}
  \begin{cases}
    \d Y_r^{\ve'}=\frac 1{\ve'}f(X_r^{\ve'}, Y_r^{\ve'}) \d r+\frac1{\sqrt{\ve'}} g(Y_r^{\ve'})\d B_r,\\
    \d \wt Y_r^{\ve',k}=\frac 1{\ve'} f(X_{s_k}^{\ve'}, \wt{Y}_r^{\ve',k})\d r+\frac 1{\sqrt{\ve'}} g(\wt{Y}_r^{\ve',k})\d B_r, \quad \wt{Y}_{s_k}^{\ve',k}=Y_{s_k}^{\ve'}.
  \end{cases}
  \end{equation}
  Notice that under the conditional expectation $\E[\,\cdot\,|\F_{s_k}]$, the distribution of $\wt{Y}_r^{\ve',k}$ equals to $P_{(r-s_k)/\ve'}^{X_{s_k}^{\ve'}}(Y_{s_k}^{\ve'},\cdot)$ due to the homogeneity and the uniqueness of solution to \eqref{b-9}.
  It follows from  (A2), (B1) and It\^o's formula that for $r\in [s_k,s_{k+1})$,
  \begin{align*}
    \E\big[|Y_r^{\ve'}-\wt{Y}_r^{\ve',k}|^2\big|\F_{s_k}\big]&\leq \frac C{\ve'}\int_{s_k}^r\E\big[|X_u^{\ve'}-X_{s_k}^{\ve'}|^2+|Y_u^{\ve'} -\wt{Y}_u^{\ve',k}|^2\big|\F_{s_k}\big]\d u\\
    &\leq \frac{C}{\ve'}\int_{s_k}^r (u-s_k)+\E\big[|Y_u^{\ve'} -\wt{Y}_u^{\ve',k}|^2\big|\F_{s_k}\big] \d u \\
    &\leq \frac{C\delta^2}{\ve'}+ \frac{C}{\ve'}\int_{s_k}^r\!\E\big[|Y_u^{\ve'} -\wt{Y}_u^{\ve',k}|^2\big|\F_{s_k}\big] \d u.
  \end{align*}
  By Gronwall's inequality,
  \begin{equation}\label{b-10}
  \E\big[|Y_r^{\ve'}-\wt{Y}_r^{\ve',k}|^2\big|\F_{s_k}\big]\leq \frac{C\delta^2}{\ve'}\e^{C(r-s_k)/\ve'}.
  \end{equation}
  Therefore, using (A1), (A2), (E1), and \eqref{b-10},
  \begin{equation}\label{b-11}
  \begin{aligned}
    &\Big|\E\Big[\int_{s_k}^{s_{k+1}}\!\!(b(X_{s_k}^{\ve'}, Y_r^{\ve'})-\bar{b}(X_{s_k}^{\ve'}))\cdot \nabla h(\wt{X}_{s_k})\d r\Big|\F_{s_k}\Big]\Big|\\
    &  \leq |\nabla h|_\infty \E\Big[\int_{s_k}^{s_{k+1}}\!\!|b(X_{s_k} ^{\ve'}, Y_r^{\ve'})-b(X_{s_k}^{\ve'},\wt{Y}_{r}^{\ve',k})|\d r\Big|\F_{s_k}\Big]\\
    &\quad  +\!\Big|\E\Big[\int_{s_k}^{s_{k+1}}\!\! \big(b(X_{s_k}^{\ve'},\wt{Y}^{\ve',k}_r)- \bar{b}(X_{s_k}^{\ve'}) \big)\cdot \nabla h(\wt X_{s_k})\d r\Big|\F_{s_k}\Big]\Big|\\
    &\leq |\nabla h|_\infty\!\!\int_{s_k}^{s_{k+1}}\!\!\! \E\big[K_1|Y_r^{\ve'}\!\!-\! \wt{Y}_r^{\ve'\!,k}|^2\big|\F_{s_k} \big]^{\frac 12}\d r \\ &\qquad  + K_2|\nabla h|_\infty \int_{s_k}^{s_{k+1}}\!   \|P_{\frac{r-s_k}{\ve'}}^{X_{s_k}^{\ve'}}( Y_{s_k}^{\ve'},\cdot)\!-\!\pi^{X_{s_k}^{\ve'}}\|_\var \d r \\
    &\leq |\nabla h|_\infty\!\int_{s_k}^{s_{k+1}}\!\!\! \big\{ \frac{C\delta \sqrt{K_1}}{\sqrt{\ve'}} \e^{\frac{C(r-s_k)}{2\ve'}} +K_2 C_0\e^{-\kappa\frac{r-s_k}{\ve'}} \big\} \d r\\
    &\leq |\nabla h|_{\infty}\!\frac{C\delta^2}{\sqrt{\ve'}}\e^{ \frac{C\delta}{2\ve'}} +|\nabla h|_{\infty}\!K_2 C_0 \int_{0}^{\delta}\e^{-\kappa\frac{r}{\ve'}} \d r.
  \end{aligned}
  \end{equation} Inserting this estimate into \eqref{b-8}, we obtain that
  \begin{equation}\label{b-12}
  \begin{aligned}
    &\Big|\E\Big[\int_s^t\!(b(X_{r(\delta)}^{\ve'},Y_r^{\ve'})-\bar{b}(X_{r(\delta)}^{\ve'}))\cdot
  \nabla h(\wt{X}_r)\d r\Big|\F_s\Big]\Big|\\
  &\leq |\nabla h|_\infty\!\sum_{k=0}^{N_t}\!\Big(\frac{C\delta^2}{ \sqrt{\ve'}}\e^{\frac{C\delta}{2\ve'}}+C_0K_2   \int_0^{\delta}\e^{-\kappa\frac{r}{\ve'}}\d r\Big)\\
  &\leq C|\nabla h|_\infty \frac{t\delta}{\sqrt{\ve'}} \e^{\frac{C\delta}{2\ve'}}+C|\nabla h|_\infty \frac{t\ve'}{\kappa\delta}\big( 1-\e^{-\kappa\frac{\delta}{\ve'}}
  \big).
  \end{aligned}
  \end{equation}
  Take $\delta=\ve'\ln\ln\big(\frac1{\ve'}\big)$, then
  \[\lim_{\ve'\to 0}\frac{\delta}{\ve'}=\infty,\quad \lim_{\ve'\to 0}\frac{\delta}{\sqrt{\ve'}}\e^{\frac{C\delta}{2\ve'}}=\lim_{\ve'\to 0} \sqrt{\ve'} \big(\ln\ln\big(\frac1{\ve'}\big)\big)\Big(\ln\big(\frac 1{\ve'}\big)\Big)^{\frac{C}{2}}=0.\]
  Hence, using this choice of $\delta$, we get from \eqref{b-12} that
  \[\lim_{\ve'\to 0} \E\Big[\int_s^t\!(b(X_{r(\delta)}^{\ve'},Y_r^{\ve'})-\bar{b}(X_{r(\delta)}^{\ve'}))\cdot
  \nabla h(\wt{X}_{r(\delta)})\d r\Big|\F_s\Big]=0.\]
  This is the desired \eqref{b-7.5}, and  further \eqref{b-6} holds.

  Consequently, we have shown that $(X_{t}^{\ve'})_{t\in [0,T]}$ converges weakly to $(\bar{X}_t)_{t\in [0,T]}$. The arbitrariness of weakly convergent subsequence of $(X_t^{\ve'})_{t\in [0,T]}$ and the uniqueness of $(\bar{X}_t)_{t\in [0,T]}$ imply that the whole sequence $(X_t^{\ve})_{t\in [0,T]}$ converges weakly to $(\bar{X}_t)_{t\in [0,T]}$. We have completed the proof.
\end{proof}
%
%

\begin{mythm}\label{thm-2}
Assume $\mathrm{(E2),\, (B1)}$, $\mathrm{(A1)}$-$\mathrm{(A3)}$ hold.
Then for any $T>0$ $(X_t^\ve)_{t\in [0,T]}$ converges weakly in $\mathcal{C}([0,T];\R^d)$ to $(\bar{X}_t)_{t\in [0,T]}$ as $\ve\to 0$.
\end{mythm}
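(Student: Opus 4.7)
The plan is to mimic the proof of Theorem \ref{thm-1} step by step, with (E1) and the total variation bound replaced, at the single critical point, by (E2) and Kantorovich's duality for $\W_1$. All preliminary ingredients carry over verbatim. Corollary \ref{cor-2}(ii) gives H\"older continuity of $\bar{b}$ and $\bar a$; with (A3), $\bar a$ is uniformly positive definite, so $\bar\sigma$ is continuous by \cite[Lemma 5.2.1]{SV79} and SDE \eqref{limit} admits a unique-in-law weak solution. The bounds $\E[\sup_{t\leq T}|X_t^\ve|^p]\leq C(T,x_0,p)$ and $\E|X_t^\ve-X_s^\ve|^4\leq C(t-s)^2$ follow from (A1) and (A2) exactly as in Theorem \ref{thm-1}, so $\{\law_{X^\ve}\}_{\ve>0}$ is tight in $\mathcal{C}([0,T];\R^d)$. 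Along any weakly convergent subsequence $\ve'\to 0$, Skorokhod's representation gives $X_\cdot^{\ve'}\to \wt X_\cdot$ almost surely in the uniform topology, and it suffices to identify $\wt X$ as a solution of the martingale problem associated with $\bar{\mathscr{L}}$.

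The decomposition \eqref{b-6}--\eqref{b-8} and the time-discretization with mesh $\delta$ proceed unchanged. On each block $[s_k,s_{k+1})$ we introduce the same frozen-slow coupling $\wt Y_r^{\ve',k}$ defined in \eqref{b-9}, so that $\law(\wt Y_r^{\ve',k}\mid\F_{s_k})=P_{(r-s_k)/\ve'}^{X_{s_k}^{\ve'}}(Y_{s_k}^{\ve'},\cdot)$ and the synchronous-coupling bound \eqref{b-10} continues to hold. The only step that needs modification is \eqref{b-11}. In the (E1)-proof one pulled out $|b|_\infty\leq K_2$ and paid the total variation cost $\kappa_1\e^{-\lambda_1(r-s_k)/\ve'}$. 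Under (E2) we instead exploit that, by (A1), $y\mapsto b(x,y)$ is Lipschitz uniformly in $x$ with constant $\sqrt{K_1}$, so Kantorovich's duality gives
\[\big|\E[b(X_{s_k}^{\ve'},\wt Y_r^{\ve',k})\mid\F_{s_k}]-\bar b(X_{s_k}^{\ve'})\big|\leq \sqrt{K_1}\,\W_1\big(P_{(r-s_k)/\ve'}^{X_{s_k}^{\ve'}}(Y_{s_k}^{\ve'},\cdot),\pi^{X_{s_k}^{\ve'}}\big)\leq \sqrt{K_1}\kappa_2\e^{-\lambda_2(r-s_k)/\ve'}.\]
Inserting this bound in place of the total variation estimate produces the analogue of \eqref{b-12} with $\kappa_1,\lambda_1$ replaced by $\sqrt{K_1}\kappa_2,\lambda_2$, and the same choice $\delta=\ve'\ln\ln(1/\ve')$ then drives the expression to zero and yields the analogue of \eqref{b-7.5}.

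The diffusion analogue \eqref{b-7} is handled identically, since (A1) together with (A2) implies that $y\mapsto (\sigma\sigma^\ast)(x,y)$ is Lipschitz with a constant uniform in $x$ (namely $\|a(x,y_1)-a(x,y_2)\|\leq 2K_2\sqrt{K_1}|y_1-y_2|$), so the same $\W_1$-duality argument applies. Combining \eqref{b-6} and \eqref{b-7} identifies $\wt X$ as a weak solution of \eqref{limit}, and uniqueness in law together with tightness of the full family promotes the subsequential convergence to the convergence of $(X_t^\ve)_{t\in [0,T]}$ as $\ve\to 0$. The main subtlety I foresee is the bookkeeping at this one step: unlike $\|\cdot\|_{\var}$, the distance $\W_1$ can only be paired with Lipschitz test functions, so one has to verify that every integrand appearing against $\pi^x$ is Lipschitz in $y$ with a uniform-in-$x$ constant; for both $b$ and $\sigma\sigma^\ast$ this is guaranteed by (A1)--(A2), and no further regularity is required.
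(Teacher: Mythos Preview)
Your proposal is correct and matches the paper's own proof essentially verbatim: the paper also reduces Theorem~\ref{thm-2} to the argument of Theorem~\ref{thm-1}, singling out the replacement of \eqref{b-11} by the $\W_1$-estimate (the paper's \eqref{b-13}) obtained from the Lipschitz-in-$y$ property of $b$ and Kantorovich duality, with the identical choice $\delta=\ve'\ln\ln(1/\ve')$. Your remark that the $\sigma\sigma^\ast$-term in \eqref{b-7} requires the Lipschitz constant $2K_2\sqrt{K_1}$ from (A1)--(A2) makes explicit a point the paper leaves implicit.
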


\begin{proof}
  This theorem can be proved along the line of the argument of Theorem \ref{thm-1}. We only point out the difference caused by using the $L^1$-Wasserstein distance instead of the total variation distance in ergodicity condition.
  Namely, instead of \eqref{b-11}, we now have
  \begin{equation}\label{b-13}
  \begin{aligned}
    &\Big|\E\Big[\int_{s_k}^{s_{k+1}}\!\!(b(X_{s_k}^{\ve'}, Y_r^{\ve'})-\bar{b}(X_{s_k}^{\ve'}))\cdot \nabla h(\wt X_{s_k})\d r\Big|\F_{s_k}\Big]\Big|\\
    &\leq |\nabla h|_\infty \E\Big[\int_{s_k}^{s_{k+1}}\!\!|b(X_{s_k} ^{\ve'}, Y_r^{\ve'})\!-\!b(X_{s_k}^{\ve'},\wt{Y}_{r}^{\ve',k})|\!+\! |b(X_{s_k}^{\ve'},\wt{Y}_{r}^{\ve'\!,k})\!-\! \bar{b}(X_{s_k}^{\ve'})|\d r\Big|\F_{s_k}\Big]\\
    &\leq |\nabla h|_\infty\!\!\int_{s_k}^{s_{k+1}}\!\!\! \big\{ \E\big[K_1|Y_r^{\ve'}\!\!-\!\wt{Y}_r^{\ve'\!,k}|^2\big|\F_{s_k} \big]^{\frac 12}\!+\! \sqrt{K_1}\W_1\big(P_{\frac{r-s_k}{\ve'}}^{X_{s_k}^{\ve'}}( Y_{s_k}^{\ve'},\cdot), \pi^{X_{s_k}^{\ve'}}\big)  \big\}\d r\\
    &\leq |\nabla h|_\infty\!\int_{s_k}^{s_{k+1}}\!\!\! \big\{ \frac{C\delta \sqrt{K_1}}{\sqrt{\ve'}} \e^{\frac{C(r-s_k)}{2\ve'}} +\sqrt{K_1} \kappa_2 \e^{-\lambda_2\frac{r-s_k}{\ve'}}   \big\} \d r\\
    &\leq |\nabla h|_{\infty}\!\frac{C\delta^2}{\sqrt{\ve'}}\e^{ \frac{C\delta}{2\ve'}} +|\nabla h|_{\infty} \kappa_2\sqrt{K_1} \int_{0}^{\delta}\e^{-\lambda_2\frac{r}{\ve'}} \d r.
  \end{aligned}
  \end{equation}
  Other details are omitted to save space. Then this theorem can be proved.
\end{proof}


We have proposed appropriate conditions to ensure the fully  coupled system $(X_t^\veps,Y_t^\veps)$ satisfies the averaging principle in the weak convergence sense. In this study of slow-fast systems, there are also some works to study the averaging principle in the $L^2$-convergence sense  with the restriction that the diffusion coefficient of the slow process does not depending on the fast process, such as \cite{Liu}. Inspired by an example in \cite{ZYB},  we shall illustrate that generally the averaging principle in the $L^2$-convergence sense does not hold for the slow-fast systems with the diffusion coefficients of the slow processes depending on the fast processes. In \cite{ZYB}, the fast process is a Markov chain on the state space $\S=\{1,2\}$, and hence its transition probability and invariant probability measure can be explicitly calculated. Now we can generalize this result to any ergodic fast processes under the help of the ergodic theorem.

Let $(Y_t)_{t\geq 0}$ be a stochastic process on a general state space $\S$ which could be $\N$ or $\R^d$. Assume $(Y_t)_{t\geq 0}$ is ergodic with the invariant probability measure $\mu$, and the ergodic theorem holds, i.e.
\[\lim_{T\to \infty}\frac 1T\int_0^T h(Y_t)\d t=\mu(h), \quad \forall \,h\in \B_b(\S).\]
Let $Y_t^\veps=Y_{t/\veps}$ be the fast process with parameter $\veps$. Consider the slow process $(X_t^\veps)_{t\geq 0} $:
\begin{equation}\label{sf-1}
\d X_t^\veps=\sigma(Y_t^\veps)\d W_t,\quad X_0^\veps=x_0\in\R^d,
\end{equation}
where $(W_t)$ is a $d$-dimensional Brownian motion, $\sigma:\S\to \R^d\times \R^d$ is bounded measurable. $\sigma$ is not a constant function.

The averaging principle in weak convergence sense tells us that $(X_t^\veps)_{t\geq0}$ shall converge  weakly to the limit process
\begin{equation}\label{sf-2}
\d \bar{X}_t=\bar{\sigma}\d W_t,\quad \bar{X}_0=x_0,
\end{equation}
where $\bar{\sigma}$ is a constant matrix satisfying $\bar{\sigma}\bar{\sigma}^\ast=\int_{\S}(\sigma\sigma^\ast)(y)\mu(\d y)$. However,
\begin{align*}
  \E|X_t^\veps-\bar{X}_t|^2&=\E\Big[\int_0^t\!\|\sigma(Y_s^\veps)-\bar{ \sigma}\|^2\d s\Big]=t\E\Big[\frac1{t/\veps}\int_0^{t/\veps} \|\sigma(Y_s)-\bar{\sigma} \|^2\d s\Big].
\end{align*}
Applying the ergodic theorem to the ergodic process $(Y_t)_{t\geq 0}$ and the bounded function $h(y):=\|\sigma(y)-\bar{\sigma}\|^2$, we obtain that
\begin{equation}\label{sf-3}
\lim_{\veps\to 0}\E|X_t^\veps-\bar{X}_t|^2=t\int_{\S}\|\sigma(y)-\bar{\sigma}\|^2\mu(\d y).
\end{equation}
No matter what constant matrix $\bar{\sigma}$ equals to, the right hand side of the previous equation cannot be $0$, and hence $X_t^\veps$ cannot converge in $L^2$ to any process $\bar{X}_t$ in the form \eqref{sf-2}.
Consequently, we see from the simple process \eqref{sf-1} that the averaging principle in $L^2$-convergence sense will not hold for a fully coupled two time-scale stochastic system.

\vskip 2em
\noindent\textbf{Conflict of Interest} The authors declare  no conflict of interest.

\end{document}